\title{Miniversal
deformations of
pairs of \\ 
skew-symmetric
{matrices under congruence}}
\date{}
\author{Andrii Dmytryshyn} 
\ead{andrii@cs.umu.se}
\address{Department of Computing Science,
Ume{\aa} University, SE-901 87 Ume{\aa}, Sweden}
\DeclareMathOperator{\Ind}{Ind}
\newcommand{\sdotsss}%
{\text{\raisebox{-2.2pt}{$\cdot\,$}%
 \raisebox{1.7pt}{$\cdot$}%
\raisebox{5.6pt}{$\,\cdot$}}}
\newcommand{\hide}[1]{}
\renewcommand{\le}{\leqslant}
\renewcommand{\ge}{\geqslant}
\newtheorem{theorem}{Theorem}[section]
\newtheorem{lemma}{Lemma}[section]
\newtheorem{corollary}{Corollary}[section]
\theoremstyle{definition}
\newtheorem{definition}{Definition}[section]
\theoremstyle{remark}
\newtheorem{remark}{Remark}[section]
\newtheorem{example}{Example}[section]
\begin{document}
\begin{abstract}
Miniversal deformations for pairs of skew-symmetric matrices under congruence are constructed. To be precise, for each such a pair $(A,B)$ we provide a normal form with a minimal number of independent parameters to which all pairs of skew-symmetric matrices $(\widetilde{A},\widetilde{B})$, close to $(A,B)$ can be reduced by congruence transformation which smoothly depends on the entries of the matrices in the pair $(\widetilde{A},\widetilde{B})$. An upper bound on the distance from such a miniversal deformation to $(A,B)$ is derived too. We also present an example of using miniversal deformations for analyzing changes in the canonical structure information (i.e. eigenvalues and minimal indices) of skew-symmetric matrix pairs under perturbations.    
\end{abstract}
\begin{keyword}
Skew-symmetric matrix pair\sep Skew-symmetric matrix pencil\sep  Congruence canonical form\sep Congruence \sep Perturbation \sep Versal deformation

\MSC 15A21\sep 15A63
\end{keyword}

\maketitle

\section{Introduction}

Canonical forms of matrices and matrix pencils, e.g., Jordan and Kronekher canonical forms, are well known and studied with various purposes but the reductions to these forms are unstable operations: both the corresponding canonical forms and the reduction transformations depend discontinuously on the entries of an original matrix or matrix pencil. 
Therefore, V.I. Arnold introduced a normal form, with the minimal number of independent parameters, to which  
an arbitrary family of matrices $\tilde{A}$ 
close to a given matrix $A$ can be reduced by similarity transformations smoothly depending on the entries of $\tilde{A}$. 
He called such a normal form a miniversal deformation of $A$.
Now the notion of miniversal deformations has been extended to matrices with respect to congruence \cite{bilin} and *congruence \cite{sf}, matrix pencils with respect to strict equivalence \cite{bo1, sgp} and congruence \cite{dm2}, etc. (more detailed list is given in the introduction of \cite{sf}). 

Miniversal deformations can help us to construct stratifications, i.e., closure hierarchies, \cite{bfg, bo2, bo4} of orbits and bundles. These stratifications are the graphs that show which canonical forms the matrices (or matrix pencils) may have in an arbitrarily small neighbourhood of a given matrix (or matrix pencil). 
In particular, the stratifications show how the eigenvalues may coalesce or split apart, appear or disappear. Both the stratifications and miniversal deformations may be useful when the matrices arise as a result of measures and their entries are known with errors, see \cite{bobook,ref4} for some applications in control {and stability} theory. 

The questions related to eigenvalues and another canonical information for the pencils $A - s B$, where $A= \pm A^T$ and $B = \pm B^T$, or $A= \pm A^*$ and $B = \pm B^*$, 
dragged some attention over time and, especially, recently, e.g., see the following papers on canonical forms \cite{mo1, thom}, codimension computations \cite{dt_d_1, dt_d_2, dmsystss, dmsysts}, low rank perturbations \cite{b}, miniversal deformations \cite{dm2, bilin, sgp}, partial \cite{bfg, f_k_s} and general \cite{skewstr} stratification results, staircase forms \cite{mehr1, mehr2}. Such pencils also appear as the structure preserving linearizations of the corresponding matrix polynomials \cite{m_m_t, m4skew}.  
In particular, the papers \cite{b, skewstr, dmsystss, mo1, thom} deal with skew-symmetric matrix pencils, i.e. $A - s B$, where $A= - A^T$ and $B = - B^T$, and \cite{m4skew} deals with skew-symmetric matrix polynomials. 
{Skew-symmetric matrix pencils appear in multisymplectic partial differential equations \cite{multi}, systems with bi-Hamiltonian structure \cite{biham}, as well as in the design of a passive velocity field controller \cite{robot}.
Recall that, an $n \times n$ skew-symmetric matrix pencil $A - s B$ is called congruent to $C - s D$ if and only if there is a non-singular matrix $S$ such that $S^TAS = C$ and $S^TBS = D$. The set of matrix pencils congruent to a skew-symmetric matrix pencil $A - s B$ is called a congruence orbit of $A - s B$.}

In this paper, we derive the miniversal deformations of skew-symmetric matrix pencils {under congruence} and bound the distance from these deformations to unperturbed matrix pencils in terms of the norm of the perturbations. The number of independent parameters in the miniversal deformations is equal to the codimensions of the congruence orbits of skew-symmetric matrix pencils (obtained independently in \cite{dmsystss}). The Matlab functions for computing these codimensions were developed  \cite{sstool} and added to the Matrix Canonical Structure (MCS) Toolbox \cite{toolbox}. Example \ref{exskew} shows how the miniversal deformations from Theorem \ref{teo2} can be used for the investigation of the possible changes of the canonical structure information.

The rest of the paper is organized as follows. 
In Section $2,$ we present the main theorems, i.e., we construct miniversal deformations of skew-symmetric matrix pencils and prove an upper bound on the distance between a skew-symmetric matrix pencil and its miniversal deformation. 
In Section $3$, 
we describe the method of constructing deformations (Section $3.1$) and derive the miniversal deformations step by step: 
for the diagonal blocks (Section $3.2$), 
for the off-diagonal blocks that correspond to the canonical summands of the same type (Section $3.3$), 
and for the off-diagonal blocks that correspond to the canonical summands of different types (Section $3.4$).

In this paper all matrices are considered over the field of complex numbers. 
Except in Example \ref{exskew}, we use the matrix pair notation $(A,B)$ instead of the pencil notation $A - s B$. We also use one calligraphic letter, e.g., ${\cal A }$ or ${\cal D}$, to refer to a matrix pair.  

\section{The main results}

In this section, we present the miniversal deformations of pairs
of skew-symmetric matrices under congruence 
and obtain an upper bound on the distance between a skew-symmetric matrix pair and its miniversal deformations. 
In Section \ref{sur}, we will derive these miniversal deformations. 

First we recall the canonical form of pairs of skew-symmetric
matrices under congruence given in \cite{thom}.
For each $k=1,2, \dots$, define the $k\times k$
matrices
\begin{equation*}\label{1aa}
J_k(\lambda):=\begin{bmatrix}
\lambda&1&&\\
&\lambda&\ddots&\\
&&\ddots&1\\
&&&\lambda
\end{bmatrix},\qquad
I_k:=\begin{bmatrix}
1&&&\\
&1&&\\
&&\ddots&\\
&&&1
\end{bmatrix},
\end{equation*}
where $\lambda \in \mathbb C$, and for each $k=0,1, \dots$, the $k\times
(k+1)$ matrices
\begin{equation*}
F_k :=
\begin{bmatrix}
1&0&&\\
&\ddots&\ddots&\\
&&1&0\\
\end{bmatrix}, \qquad
G_k :=
\begin{bmatrix}
0&1&&\\
&\ddots&\ddots&\\
&&0&1\\
\end{bmatrix}.
\end{equation*}
All non-specified entries of the matrices $J_k(\lambda), I_k,F_k,$ and $G_k$ are zero. 
\begin{lemma}[\cite{s_76, thom}]\label{lkh}
Every pair of
skew-symmetric complex
matrices is congruent
to a direct sum,
determined uniquely up
to permutation of
summands, of pairs of
the form
\begin{align}
\label{can}
\mathcal H_n(\lambda)&:= \left(
\begin{bmatrix}0&I_n\\
-I_n &0
\end{bmatrix},
\begin{bmatrix}0&J_n(\lambda)\\
-J_n(\lambda)^T &0
\end{bmatrix}
 \right),\quad \lambda \in\mathbb C,\\
\label{can2}
\mathcal K_n&:= \left(
\begin{bmatrix}0&J_n(0)\\
-J_n(0)^T &0
\end{bmatrix},
\begin{bmatrix}0&I_n\\
-I_n&0
\end{bmatrix}
 \right),\\
\label{can3}
\mathcal L_n&:= \left(
\begin{bmatrix}0&F_n\\
-F_n^T &0
\end{bmatrix},
\begin{bmatrix}0&G_n\\
-G_n^T&0
\end{bmatrix}
 \right).
  \end{align}
\end{lemma}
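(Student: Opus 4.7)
The plan is to reduce the classification of pairs of skew-symmetric matrices under congruence to Kronecker's classification of matrix pencils under strict equivalence, and then to exploit the skew-symmetry of the pencil to group Kronecker blocks into the canonical summands $H_n(\lambda)$, $K_n$, and $L_n$.

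First I would associate to $(A,B)$ the pencil $\mathcal{P}(\lambda,\mu):=\lambda A+\mu B$ and observe that a congruence $(A,B)\mapsto(S^TAS,S^TBS)$ corresponds exactly to $\mathcal{P}\mapsto S^T\mathcal{P}S$. Thus congruence of pairs is the restriction of strict equivalence of pencils to the diagonal subgroup $\{(P,Q):P=S^T,\,Q=S\}$, and skew-symmetry of $A$ and $B$ is equivalent to $\mathcal{P}(\lambda,\mu)^T=-\mathcal{P}(\lambda,\mu)$.

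Next I would invoke the Kronecker canonical form, which decomposes $\mathcal{P}$ up to strict equivalence into a direct sum of Jordan blocks $\lambda I_n+\mu J_n(\alpha)$ for finite $\alpha\in\mathbb{C}$, blocks $\lambda J_n(0)+\mu I_n$ for the eigenvalue at infinity, and rectangular singular blocks $\lambda F_n+\mu G_n$ together with their transposes. Because $\mathcal{P}^T=-\mathcal{P}$, the Kronecker invariants are invariant under transposition of the pencil, which forces the blocks to occur in matching pairs: two equal Jordan blocks at each finite (or infinite) eigenvalue, and one singular block $\lambda F_n+\mu G_n$ together with its transpose $\lambda F_n^T+\mu G_n^T$.

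For each such pair I would then construct an explicit congruence realising it as the corresponding block-antidiagonal skew-symmetric summand $\left(\begin{smallmatrix}0 & X\\ -X^T & 0\end{smallmatrix}\right)$, placing one member of the pair in the top-right corner and the other (or its transpose) in the bottom-left: a finite-eigenvalue pair becomes $H_n(\alpha)$, an infinite pair becomes $K_n$, and a singular pair becomes $L_n$. The main obstacle is exactly this alignment step, since Kronecker's theorem allows independent left and right transformations while congruence forces them to be transposes of each other; the doubled block-antidiagonal arrangement absorbs precisely this extra freedom, so that a congruence $S=\operatorname{diag}(T,U)$ with $U$ determined by $T$ suffices to reduce the paired blocks to the prescribed form. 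Uniqueness of the decomposition up to permutation of summands then follows immediately from uniqueness of the Kronecker normal form, since the labels $(\alpha,n)$, $(\infty,n)$, and $n$ attached to $H_n(\alpha)$, $K_n$, $L_n$ are read off from the Jordan and minimal-index invariants of $\mathcal{P}$.
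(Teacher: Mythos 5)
The paper does not prove this lemma at all: it is quoted from Thompson's 1991 paper, so your proposal has to stand on its own. The overall strategy (pass to the pencil $\lambda A+\mu B$, apply Kronecker's theorem, then use the skew-symmetry to pair up blocks and realize each pair as a block-antidiagonal congruence summand) is indeed the standard route to such canonical forms. However, the step where you extract the pairing of blocks contains a genuine logical gap. You argue that since $\mathcal{P}^T=-\mathcal{P}$, the Kronecker invariants are ``invariant under transposition,'' and that this \emph{forces} two equal Jordan blocks at each finite or infinite eigenvalue. But transposition preserves the elementary divisors of \emph{every} square pencil (any pencil is strictly equivalent to its transpose), so the relation $\mathcal{P}^T\sim\mathcal{P}$ imposes no constraint whatsoever on the elementary divisors and in particular does not force even multiplicities. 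The same reasoning applied to a symmetric pencil ($\mathcal{P}^T=\mathcal{P}$) would ``prove'' that its elementary divisors also come in pairs, which is false already for the $1\times 1$ pencil $(\,1,\ \lambda\,)$. What the transposition symmetry does give you is only the equality of the multisets of column and row minimal indices. The evenness of the multiplicity of each elementary divisor of a skew-symmetric pencil is a true but nontrivial theorem; it requires an argument about the form itself, e.g.\ showing that a regular indecomposable skew-symmetric pencil cannot carry a single Jordan block of any size (equivalently, that a $B$-selfadjoint operator on a symplectic space has every Jordan block with even multiplicity), or Sergeichuk's reduction of the classification of forms to the classification of representations with involution.

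A second, smaller issue is that you identify the ``alignment step'' --- converting a strict equivalence that matches the paired Kronecker blocks into an actual congruence --- as the main obstacle but then only assert that the doubled antidiagonal arrangement ``absorbs precisely this extra freedom.'' That assertion is exactly the technical content of Thompson's proof and needs to be carried out: one must produce an orthogonal (with respect to both forms) direct-sum decomposition of the underlying space and an explicit congruence on each summand, not merely observe that the degrees of freedom count out. The uniqueness part of your argument is fine: once existence is established, the multiset of summands is read off from the Kronecker invariants of the pencil, which are congruence invariants.
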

\noindent Thus, each pair of skew-symmetric
matrices is congruent to a direct sum
\begin{equation}\label{kus}
(A,B)_{\rm can}=
\bigoplus_{i=1}^{a} \mathcal  H_{h_i}(\lambda_i)
 \oplus
\bigoplus_{j=1}^{b} \mathcal K_{k_j}
 \oplus
\bigoplus_{r=1}^{c} \mathcal L_{l_r},
\end{equation}
consisting of direct summands of three
types {of pairs}.

\subsection{Miniversal deformations}

The concept of a
miniversal deformation
of a matrix with
respect to similarity was
given by V. I. Arnold
\cite{arn} (see also
\cite[\S\,30B]{arn3}).
This concept can straightforwardly be extended to pairs of skew-symmetric matrices with respect to congruence.

A \emph{deformation}
of a pair of skew-symmetric $\hat{n}\times \hat{n}$ matrices $(A,B)$ is
a holomorphic mapping
${\cal A}(\vec\delta)$, where $\vec\delta=(\delta_1,\dots, \delta_k)$, from a
neighborhood
$\Omega \subset
\mathbb C^k$ of $\vec
0=(0,\dots,0)$ to the
space of pairs of
skew-symmetric $\hat{n}\times \hat{n}$ matrices
such that ${\cal A}(\vec 0)=(A,B)$. Note that in this paper we consider only skew-symmetric deformations, i.e., the skew-symmetric structure of matrix pairs is preserved. Therefore we write only ``deformation'' but not ``skew-symmetric deformation'' without the risk of confusion.

\begin{definition}\label{d}
A deformation ${\cal
A}(\delta_1,\dots,\delta_k)$
of a pair of skew-symmetric matrices 
$(A,B)$ is called
\emph{versal} if for every
deformation ${\cal
B}(\sigma_1,\dots,\sigma_l)$
of $(A,B)$ we have  
\begin{equation*}\label{kft}
{\cal B}(\sigma_1,\dots,\sigma_l)=
I(\sigma_1,\dots,\sigma_l)^{T}
{\cal
A}(\varphi_1(\vec\sigma),\dots,
\varphi_k(\vec\sigma))
I(\sigma_1,\dots,\sigma_l),
\end{equation*}
where $I(\sigma_1,\dots,\sigma_l)$ is a deformation of the identity matrix, 
and all
$\varphi_i(\vec\sigma)$
are convergent in a
neighborhood of $\vec
0$ power series such
that $\varphi_i(\vec
0)=0$. A versal
deformation ${\cal
A}(\delta_1,\dots,\delta_k)$
of $(A,B)$  is called
\emph{miniversal} if
there is no versal
deformation having
less than $k$
parameters.
\end{definition}

By a \emph{$(0,*)$
matrix} we mean a
matrix whose entries
are $0$ and $*$ and we consider pairs $\cal D$ of $(0,*)$
matrices. We
say that a pair of skew-symmetric matrices
\emph{is of the form}
$\cal D$ if it can be
obtained from $\cal D$
by replacing the stars
with complex numbers, respecting the skew-symmetry.
Denote by ${\cal
D}({\mathbb C})$ the
space of all pairs of skew-symmetric matrices 
of the form
$\cal D$, and by
${\cal D}(\vec
{\varepsilon})$ the
pair of parametric skew-symmetric 
matrices obtained from
${\cal D}$ by
replacing the $(i,j)$-th and $(j,i)$-th
stars with the parameters
${\varepsilon}_{ij}$ and $-{\varepsilon}_{ji}$, respectively, 
in the first matrix and the $(i',j')$-th and $(j',i')$-th
stars with the parameters ${\varepsilon}^{'}_{i'j'}$ and $-{\varepsilon}^{'}_{j'i'}$, respectively, 
in the second matrix. In other words 
\begin{equation} \label{a2z}
{\cal D}(\vec
{\varepsilon}):=
\Big(\sum_{(i,j) \in  
\Ind_1({\cal D})}
\varepsilon_{ij}E_{ij},
\sum_{(i',j')\in \Ind_2({\cal D})}
{\varepsilon}^{'}_{i'j'}E_{i'j'}\Big),
\end{equation}
\begin{equation} \label{spacedc}
{\cal D}(\mathbb C):= 
\left\{
{\cal D}(\vec{\varepsilon}) \ | \ \vec{\varepsilon} \in \mathbb C^k
\right\} = 
\left\{
\Big(\bigplus_{(i,j)\in 
\Ind_1({\cal D})}
{\mathbb C}
E_{ij}, 
\bigplus_{(i',j')\in
\Ind_2({\cal D})}
{\mathbb C}
E_{i'j'}\Big)
\right\},
\end{equation}
where
\begin{equation*}\label{a2za}
\Ind_1({\cal
D}),\Ind_2({\cal
D})\subseteq
\{1,\dots, \hat{n}\}\times
\{1,\dots, \hat{n}\},
\end{equation*}
are the sets of
indices of the stars
in the upper-triangular parts of the first and
the second matrices,
respectively, of the
pair ${\cal D}$, and
{$E_{ij}$ is the matrix
whose $(i,j)$-th entry
is $1$, $(j,i)$-th entry
is $-1$ and the other entries 
are zero}. Note that the large ``$\bigplus$'' in \eqref{spacedc} denotes 
the entrywise sum of matrices.

Following \cite{sgp}, we say that a
miniversal deformation
of $(A,B)$ is
\emph{simplest} if it
has the form
$(A,B)+{\cal D}(\vec
{\varepsilon})$, where
$\cal D$ is a pair of $(0,*)$
matrices. If the matrix pair $\cal
D$ in $(A,B)+{\cal D}(\vec
{\varepsilon})$ has no zero
entries (except on the main diagonals), 
then $\cal D$ defines the deformation 
\begin{equation}\label{edr}
{\cal U}(\vec
{\varepsilon}):=
\Big(A+\sum_{i=1}^{\hat{n}} \sum_{j=i+1}^{\hat{n}}
\varepsilon_{ij}E_{ij},\
B+\sum_{i=1}^{\hat{n}} \sum_{j=i+1}^{\hat{n}}
{\varepsilon}^{'}_{ij}E_{ij}\Big).
\end{equation}

In other words, for all pairs of $\hat{n} \times \hat{n}$ skew-symmetric matrices
$(A+E,B+E')$ that are close to a given
pair of skew-symmetric
matrices $(A,B)$,
we derive the normal form ${\cal A}(E,E')$ with respect to the congruence
transformation
\begin{equation} \label{trans}
(A+E,B+E') \mapsto S(E,E')^T(A+E,B+E') S(E,E') =: {\cal A}(E,E'),
\end{equation}
in which $S(E,E')$ is holomorphic at 0 (i.e. its
entries are power series in the entries of $E$ and $E'$
that are convergent in a neighborhood of 0) and
$S(0,0)$ is a nonsingular $\hat{n} \times \hat{n}$ matrix.

Since ${\cal A}(0,0)=S(0,0)^T(A,B)S(0,0)$, we can
take ${\cal A}(0,0)$ equal to the
congruence canonical
form $(A,B)_{\text{\rm
can}}$ of $(A,B)$, see \eqref{kus}.
Then
\begin{equation}
\label{ksy} {\cal
A}(E,E')=
(A,B)_{\text{\rm
can}}+{\cal D}(E,E'),
\end{equation}
where  ${\cal
D}(E,E')$ ($={\cal D}(\vec{\varepsilon}) $ for some $ \vec{\varepsilon} \in \mathbb C^k$) is a pair of
skew-symmetric matrices that is
holomorphic at $0$ and
${\cal D}(0,0)=(0,0)$.
In Theorem \ref{teo2} we
derive ${\cal
D}(E,E')$ with the
minimal number of
nonzero entries that
can be attained using the congruence transformation defined in 
\eqref{trans}.

We use the following
notation, in
which each star
denotes a function of
the entries of $E$ and
$E'$ that is
holomorphic at zero:

$\bullet$ $0_{mn}$ is
the $m \times n$ zero
matrix;

$\bullet$
$0_{mn \ast}$
is the $m \times n$
matrix
$
\begin{bmatrix}
&  && 0\\
&0_{m-1,n-1}&& \vdots\\
&  && 0\\
0&\ldots&0&*
\end{bmatrix};
$

$\bullet$
$0_{mn}^{\nwarrow}$
is the $m \times n$
matrix
\begin{equation}\label{mgde}
\begin{bmatrix}
\begin{matrix}
*\\ *\\\vdots\\ *
\end{matrix}
&0_{m,n-1}
\end{bmatrix}\quad
 \text{if $m\le n$, and }
\begin{bmatrix}
  *\ *\ \dots\ *
 \\[2mm]
   0_{m-1,n}
\\[2mm]
\end{bmatrix}\quad
\text{if $m\ge n$}
 \end{equation}
(if $m=n$, then we can
take any of the
matrices defined in
\eqref{mgde});

$\bullet$
$0^{\nearrow}$,
$0^{\searrow}$
and $0^{\swarrow}$
are matrices that
are obtained from
$0^{\nwarrow}$,
by clockwise
rotation by $90^{\circ}$,
$180^{\circ}$ and
$270^{\circ}$, respectively;

$\bullet$
$0_{mn}^{\leftarrow}$
is the
$m \times n$ matrix
$\label{bjhf}
\begin{bmatrix}
  * &  \\
   \vdots & 0_{m,n-1} \\
  * &
\end{bmatrix}
$ 

{(in contrast to $0_{mn}^{\nwarrow}$ and $0_{mn}^{\swarrow}$, the matrix $0_{mn}^{\leftarrow}$ has stars in the first column even if $m>n$);}

$\bullet$
$0_{mn}^{\rightarrow}$ is the
$m \times n$ matrix  $
\begin{bmatrix}
  & *  \\
  0_{m,n-1} & \vdots \\
  & *
\end{bmatrix}
$

{(in contrast to $0_{mn}^{\nearrow}$ and $0_{mn}^{\searrow}$,  the matrix $0_{mn}^{\rightarrow}$ has stars in the last column even if $m>n$);}

{
$\bullet$
$0_{mn}^{\righthalfcap}$
is the $m \times n$
matrix
$
 \begin{bmatrix}
* & & \\
\vdots&0_{m-1,n-1}& \\
*&\ldots & *
\end{bmatrix}; 
$}

$\bullet$ $0^{\boxminus}_{mn}$ with $m < n$
is the $m \times n$
matrix
\begin{equation*}\label{hui}
\begin{bmatrix}
\begin{matrix}
0&\dots& 0\\
\vdots&& \vdots
\end{matrix} &0\\
\begin{matrix}
0& \dots&0\end{matrix}&
\begin{matrix}
 *\ \dots\ * \ 0\
\end{matrix}
\end{bmatrix}\qquad(\text{$n-m$
stars})
\end{equation*}
if $m\geq n$ then $0^{\boxminus}_{mn}=0$.

Further, we will usually omit the indices $m$ and $n$.

{
Let
\begin{equation}\label{gto}
(A,B)_{\text{\rm \rm
can}}={\mathcal X}_1\oplus\dots
\oplus {\mathcal X}_t
\end{equation}
be a canonical pair of
skew-symmetric complex
matrices for
congruence, in which
${\mathcal X}_1,\dots,{\mathcal X}_t$ are
pairs of the form
\eqref{can}--\eqref{can3}, and let 
${\cal D}(E,E')$ be a
pair of skew-symmetric matrices, defined in \eqref{ksy}, whose
matrices are
partitioned into
blocks conformally to
the decomposition
\eqref{gto}:
\begin{equation}\label{grsd}
{\cal D}(E,E')= {\cal D} = \left(
\begin{bmatrix}
D_{11}&\dots&
D_{1t}
 \\
\vdots&\ddots&\vdots\\
D_{t1}&\dots&
D_{tt}
\end{bmatrix},
\begin{bmatrix}
D'_{11}&\dots&
D'_{1t}
 \\
\vdots&\ddots&\vdots\\
D'_{t1}&\dots&
D'_{tt}
\end{bmatrix}
\right). 
\end{equation}
Note that $(D_{ji},D'_{ji})=(-D_{ij}^T,-D_{ij}^{'T})$ and 
define
\begin{equation}\label{lhsd}
{\cal D}(\mathcal X_i):=(D_{ii},D'_{ii}) \quad  \text{ and } \quad 
 {\cal
D}(\mathcal X_i, \mathcal X_j):=(
D_{ij},D'_{ij}), \ i<j.
\end{equation}}

Since each pair of skew-symmetric matrices is congruent to its
canonical pair of matrices, it suffices to construct the miniversal
deformations for the pairs of canonical matrices
(i.e. direct sums of the pairs \eqref{can}--\eqref{can3}). 

\begin{theorem}\label{teo2}
Let $(A,B)_{\text{\rm \rm can}}$
be a pair of
skew-symmetric complex
matrices 
\eqref{kus}. 
A simplest
miniversal deformation of $(A,B)_{\text{\rm can}}$
can be taken in the
form $(A,B)_{\text{\rm
can}} +{\cal D}$ in which ${\cal D}$ is a
pair of $(0,*)$ matrices (the stars denote
independent parameters, up to
skew-symmetry, see also Remark \ref{indep}) whose
matrices are partitioned into blocks conformally to
the decomposition of $(A,B)_{\text{\rm can}}$, see \eqref{grsd}, and 
the blocks of ${\cal D}$ are
defined, in the notation \eqref{lhsd}, as follows:  

{\rm(i)} The
{\rm diagonal blocks} of
${\cal D}$ are defined
by
\begin{align}
\label{Hdef}
{\cal
D}(\mathcal H_n(\lambda))&=
\left( 0,
\begin{bmatrix}
0&0^{\swarrow}\\
0^{\nearrow}&0
\end{bmatrix} \right),\\
\label{Kdef}
{\cal D} (\mathcal K_n)&=\left(
\begin{bmatrix}
0&0^{\swarrow}\\
0^{\nearrow}&0
\end{bmatrix}, 0 \right),\\
\label{Ldef}
{\cal D}(\mathcal L_n)&=(0,0).
\end{align}

{\rm(ii)} The
{\rm off-diagonal blocks} of
${\cal D}$ whose
horizontal and
vertical strips
contain pairs of
$(A,B)_{\text{\rm
can}}$ of the {\rm same
type} are defined by
\begin{align}\label{lsiu1}
{\cal D}
(\mathcal H_n(\lambda), \mathcal H_m(\mu))&
       =
  \begin{cases}
(0,\:0) &\text{if
$\lambda\ne\mu,$}
      \\
    \left(0,\: \begin{bmatrix}
0^{\searrow}&0^{\swarrow}
 \\ 0^{\nearrow}&0^{\nwarrow}
\end{bmatrix}
\right)
 &\text{if $\lambda=\mu,$}
  \end{cases} \\
\label{lsiu2}
{\cal D} (\mathcal K_n,\mathcal K_m)&=
  \left(
 \begin{bmatrix}
0^{\searrow}&0^{\swarrow}
 \\ 0^{\nearrow}&0^{\nwarrow}
\end{bmatrix} , 0
\right),\\
\label{lsiu3}
{\cal D}
(\mathcal L_n,\mathcal L_m)&=\left(
 \begin{bmatrix}
0&0
 \\ 0&0_{\ast}
\end{bmatrix} ,
 \begin{bmatrix}
0&0^{\boxminus T}_{m+1,n}
 \\ 0^{\boxminus}_{n+1,m}&0^{\righthalfcap}
\end{bmatrix}
\right).
\end{align}
{\rm(iii)} The
{\rm off-diagonal blocks} of
${\cal D}$ whose
horizontal and
vertical strips
contain pairs of
$(A,B)_{\text{\rm
can}}$ of {\rm different
types} are defined by
\begin{align}\label{kut}
{\cal D}
(\mathcal H_n(\lambda),\mathcal K_m)&=(0,0),\\
\label{hnlm}
{\cal D}
(\mathcal H_n(\lambda),\mathcal L_m)&=\left(
0,
 \begin{bmatrix}
0&0^{\leftarrow}
\end{bmatrix}
\right),\\
\label{ktlm}
 {\cal D}
(\mathcal K_n,\mathcal L_m)&=\left(0^{\rightarrow} ,
0
\right).
\end{align}
\end{theorem}

\begin{remark}[About the independency of parameters] \label{indep}
All parameters placed instead of the stars in the upper triangular parts of matrices of ${\cal D}$ are independent and the lower triangular parts are defined by the skew-symmetry. In particular, it means that parametric matrix pairs obtained from $(D_{ij},  D'_{ij})$ and $(D_{i'j'}, D'_{i'j'})$ have dependent (in fact, equal up to the sign) parametric entries if and only if $i'=j$ and $j'=i$.
\end{remark}

Let us give an example of how the miniversal deformations from Theorem \ref{teo2} can be used for the investigation of changes of the canonical structure information under small perturbations. \begin{example} \label{exskew}
We show that in an arbitrarily small neighbourhood of a matrix pair with the canonical form $\mathcal L_1 \oplus \mathcal L_0$ there is always a matrix pair with the canonical form $\mathcal  H_2(\lambda), \lambda \neq 0$ (in fact, also with $\mathcal  H_2(0)$ and $\mathcal  K_2$). 

It is enough to consider perturbations of $\mathcal L_1 \oplus \mathcal L_0$ in the form of the miniversal deformations given in Theorem \ref{teo2} (with only three independent nonzero parameters). Since we will use the theory developed for matrix pencils we switch to the pencil notation $X - s Y$, instead of $(X,Y)$. Thus a miniversal deformation of $\mathcal L_1 \oplus \mathcal L_0$ is the pencil  
{\footnotesize
\begin{equation} \label{1pen}
\begin{bmatrix} 0&1&0&0\\
-1&0&0&0\\
0&0&0&\varepsilon_1\\
0&0&-\varepsilon_1&0
 \end{bmatrix} - s 
 \begin{bmatrix} 0&0&1&0\\
0&0&0&\varepsilon_2\\
-1&0&0&\varepsilon_3\\
0&-\varepsilon_2&-\varepsilon_3&0
 \end{bmatrix}
 =
\begin{bmatrix} 0&1&- s&0\\
-1&0&0&- s \varepsilon_2\\
 s&0&0&\varepsilon_1 - s \varepsilon_3\\
0& s \varepsilon_2&-\varepsilon_1 + s \varepsilon_3&0
 \end{bmatrix},
 \end{equation}
 }
 which has the Smith form (see \cite{maltcev} for the definition)
 {\small
\begin{equation} \label{1smith}
 \begin{bmatrix} 1&0&0&0\\
0&1&0&0\\
0&0&\varepsilon_1-s \varepsilon_2 - s^2 \varepsilon_3 &0\\
0&0&0&\varepsilon_1-s \varepsilon_2 - s^2 \varepsilon_3
 \end{bmatrix}.
\end{equation}
}
In turn, the pencil $\mathcal H_2(\lambda)$ is  
{\footnotesize
\begin{equation} \label{2pen}
\begin{bmatrix} 0&0&1&0\\
0&0&0&1\\
-1&0&0&0\\
0&-1&0&0
 \end{bmatrix} - s 
 \begin{bmatrix} 0&0&\lambda&1\\
0&0&0&\lambda\\
-\lambda&0&0&0\\
-1&-\lambda&0&0
 \end{bmatrix} = \begin{bmatrix} 0&0&1-s \lambda&s \\
0&0&0&1 - s \lambda\\
-1+ s \lambda&0&0&0\\
-s &-1+ s \lambda&0&0
 \end{bmatrix},
\end{equation}}
and has the Smith form 
{\small
\begin{equation} \label{2smith}
 \begin{bmatrix} 1&0&0&0\\
0&1&0&0\\
0&0&1- 2 s \lambda - s^2 \lambda^2 &0\\
0&0&0&1- 2 s \lambda - s^2 \lambda^2
 \end{bmatrix}.
\end{equation}}\noindent 
Now \eqref{1smith} with $\varepsilon_2= 2 \varepsilon_1 \lambda$ and $ \varepsilon_3 = \varepsilon_1 \lambda^2$ is strictly equivalent to \eqref{2smith} which implies that the pencils \eqref{1pen} and \eqref{2pen} are strictly equivalent by \cite[Proposition A.5.1, p. 663]{lancrod} (note that $\lambda \neq 0$ and we must choose $\varepsilon_1 \neq 0$) and due to \cite[Theorem 3, p. 275]{maltcev} the pencils \eqref{1pen} and \eqref{2pen} are congruent. Since $\varepsilon_1$ (and thus $\varepsilon_2$ and $\varepsilon_3$) can be chosen arbitrarily small we can find a pair with the canonical form $\mathcal H_2(\lambda), \lambda \neq 0$ in any neighbourhood of $\mathcal L_1 \oplus \mathcal L_0$.

Note that, for \eqref{1pen} and \eqref{2pen} we could have also computed the skew-symmetric Smith form derived in \cite{m4skew}.  
The result of this example also follows from the more general result in \cite{skewstr} but the proof given here is constructive, i.e. the perturbation is derived explicitly.    
\end{example}

The pair of matrices $\cal
D$ $\eqref{grsd}$ in Theorem
\ref{teo2} will be
constructed in Section
\ref{sur} as follows.
The vector space
\begin{equation*}\label{msi}
T_{(A,B)_{\text{\rm
can}}}:=\{C^T(A,B)_{\text{\rm
can}}
+(A,B)_{\text{\rm
can}}C\,|\,
C\in{\mathbb
C}^{\hat{n}\times \hat{n}}\}
\end{equation*}
is the tangent space
to the congruence
class of
$(A,B)_{\text{\rm
can}}$ at the point
$(A,B)_{\text{\rm
can}}$ since
\begin{equation} \label{tangatp}
\begin{split}
(I+\varepsilon
C)^T(A,B)_{\text{\rm
can}} (I+\varepsilon
C) &=(A,B)_{\text{\rm
can}}+ \varepsilon(C^T
(A,B)_{\text{\rm
can}}+
(A,B)_{\text{\rm
can}}C) \\
&+\varepsilon^2C^T
(A,B)_{\text{\rm
can}}C
\end{split}
\end{equation}
for all ${\hat{n}}$-by-${\hat{n}}$
matrices $C$ and each
$\varepsilon\in\mathbb
C$. Then $\cal D$ is constructed such that 
\begin{equation}\label{jyr}
{\mathbb C}^{\,\hat{n}\times \hat{n}}_{c}\times{\mathbb
C}^{\,\hat{n}\times \hat{n}}_{c}=T_{(A,B)_{\text{\rm
can}}}
+ {\cal
D}({\mathbb C})
\end{equation}
in which ${\mathbb
C}^{\,\hat{n}\times \hat{n}}_{c}$
is the space of all skew-symmetric ${\hat{n}} \times {\hat{n}}$
matrices,  ${\cal
D}({\mathbb C})$ is
the vector space of
all pairs of skew-symmetric matrices obtained from $\cal D$
by replacing its stars by complex numbers, see \eqref{spacedc}.
Thus, one half of the number of
stars in $\cal D$ is
equal to the
codimension of the
congruence orbit of
$(A,B)_{\text{\rm
can}}$ (note that the total
number of the stars is always even).
Lemma \ref{t2.1} in Section \ref{31section} ensures
that any pair of $(0,*)$ matrices that satisfies
\eqref{jyr} can be
taken as $\cal D$ in
Theorem \ref{teo2}.

\subsection{Upper bound for the norm of miniversal deformations}
In this section, we bound the distance from the miniversal deformations to a matrix pair that was originally perturbed, using the norm of the perturbations. 
In particular, we see that this distance can be made arbitrarily small 
by decreasing the size of the allowed perturbations. Similar techniques are used in \cite{bilin,sf} to prove the versality of the deformations.

We use the Frobenius norm of a complex
$n\times n$ matrix $Y=[y_{ij}]$:
\begin{equation*}\label{4a}
\|Y\|:=\sqrt{\sum |y_{ij}|^2}.
\end{equation*}
Recall that for matrices $Y$ and
$Z$ and $\nu, \omega \in\mathbb
C$ the following inequalities hold (e.g., see \cite[Section
5.6]{hor_John})
\begin{equation}\label{lk}
\|\nu Y+ \omega Z\|\le
|\nu |\,\|Y\|+|  \omega |\,\|Z\| \quad \text{and} \quad
\|YZ\|\le \|Y\|\,\|Z\|.
\end{equation}
Let $(A,B)\in({\mathbb
C}^{\,\hat{n}\times \hat{n}}_c,{\mathbb
C}^{\,\hat{n}\times \hat{n}}_c)$ and $\alpha:=\|A\| , \beta:=\|B\|$. 
By
\eqref{jyr}, for each pair of skew-symmetric ${\hat{n}}$-by-${\hat{n}}$ matrices
$(E_{ij},0)$ and $(0,E_{i'j'})$, $1 \le i,j,i',j' \le {\hat{n}}$ there exist $X_{ij},X'_{i'j'}\in\mathbb
C^{\hat{n}\times \hat{n}}$ such
that
\begin{equation}
\begin{aligned}\label{8}
(E_{ij},0)&+X_{ij}^T(A+M,B+N)
+(A+M,B+N)X_{ij} \in {\cal
D}({\mathbb C}),\\
(0,E_{i'j'})&+X_{i'j'}^{'T}(A+M,B+N)
+(A+M,B+N)X'_{i'j'} \in {\cal
D}({\mathbb C}),
\end{aligned}
\end{equation}
where ${\cal
D}({\mathbb C})$ is
defined in \eqref{spacedc}.
If $(i,j)\in \Ind_1({\cal D})$, then
$(E_{ij},0) \in {\cal
D}({\mathbb C})$, and
so we can put
$X_{ij}=0$. Analogously, if $(i',j')\in \Ind_2({\cal D})$, then
$(E_{i'j'},0) \in {\cal
D}({\mathbb C})$, and
so we can put
$X_{i'j'}=0$.
Denote
\begin{equation}\label{gamma}
\gamma:=
\sum_{(i,j)\notin \Ind_1({\cal D})}
\|X_{ij}\|+ \sum_{(i',j')\notin{ \Ind_2({\cal D})}} \|X^{'}_{i'j'}\|.
\end{equation}

\begin{theorem} 
Let $(A,B)\in({\mathbb
C}^{\,\hat{n}\times \hat{n}}_c,{\mathbb
C}^{\,\hat{n}\times \hat{n}}_c)$ and
let $\varepsilon \in \mathbb{R}$ such that 
$$
0< \varepsilon < \frac{1}{\max \{ 1+\gamma(\alpha+1)(2+ \gamma),1+\gamma(\beta+1)(2+ \gamma)\} }, 
$$ 
{where $\alpha:=\|A\| , \beta:=\|B\|$  and $\gamma$ is defined in \eqref{gamma}}.
For each pair of skew-symmetric ${\hat{n}}$-by-${\hat{n}}$ matrices $(M,N)$ satisfying
\begin{equation} \label{15}
\|M\|<\varepsilon^{2},\qquad
\|N\|<\varepsilon^{2},
\end{equation}
there exists a matrix $S=I_{\hat{n}}+X$
depending holomorphically on the
entries of $(M,N)$ in a neighborhood of
zero such that
\[S^{T}(A+M,B+N)S=(A+P,B+Q), \ \ (P,Q) \in {\cal D}({\mathbb C}), \ \|  P \|<\varepsilon, \text{and} \ \| Q \|<\varepsilon, \] 
where ${\mathbb C}^{\,\hat{n}\times \hat{n}}_{c}\times{\mathbb C}^{\,\hat{n}\times \hat{n}}_{c}=T_{(A,B)_{\text{\rm can}}} + {\cal D}({\mathbb C})$.
\end{theorem}
\begin{proof}
First, note that if $M=0$ and $N=0$ then $S=I_{\hat{n}}$.

We construct
$S=I_{\hat{n}}+X$.  If
$M=\sum_{i,j}
m_{ij}E_{ij}$ and $N=\sum_{i,j}
n_{ij}E_{ij}$ (i.e.,
 $M=[m_{ij}]$ and $N=[n_{ij}]$),
then we can chose $X_{ij}$ and $X'_{ij}$ \eqref{8}, such that 
\begin{multline*}
\sum_{i,j}
(m_{ij}E_{ij},n_{ij}E_{ij})+\sum_{i,j}
(m_{ij}X_{ij}^{T}+n_{ij}X_{ij}^{'T})(A+M,B+N) \\ +
(A+M,B+N)\sum_{i,j}(m_{ij}X_{ij}+n_{ij}X'_{ij}) \in
{\cal D}({\mathbb C})
\end{multline*}
and for
\[
X:=\sum_{i,j}
(m_{ij}X_{ij}+n_{ij}X'_{ij})
\]
we have
\begin{equation*}\label{18}
(M,N)+X^T(A+M,B+N)+(A+M,B+N)X\in
{\cal D}({\mathbb C}).
\end{equation*}

If
$(i,j)\notin \Ind_1({\cal D})$ (or, respectively, $(i,j)\notin \Ind_2({\cal D})$), then
$|m_{ij}|<\varepsilon^{2}$ (or, respectively, $|n_{ij}|<\varepsilon^{2}$)
by \eqref{15}.
We obtain
\begin{align*}
\|X\|&\le
\sum_{(i,j)\notin \Ind_1({\cal D})}
|m_{ij}|\|X_{ij}\|+ \sum_{(i,j)\notin{ \Ind_2({\cal D})}} |n_{ij}|\|X^{'}_{ij}\| \\
&<
\sum_{(i,j)\notin \Ind_1({\cal D})}
\varepsilon^{2}\|X_{ij}\|+ \sum_{(i,j)\notin{ \Ind_2({\cal D})}} \varepsilon^{2} \|X^{'}_{ij}\|=
\varepsilon^{2} \gamma.
\end{align*}
Put
\[
S^{T}(A+M,B+N)S=(A+P,B+Q)\quad \text{where }
S:=I_{\hat{n}}+X,
\]
then
\begin{multline*}\label{18de}
(P,Q)=(M,N)+X^T(A+M,B+N)+(A+M,B+N)X \\
+X^T(A+M,B+N)X.
\end{multline*}
Summing up, we obtain 
\begin{align*}
\|P\|
&\le\|M\|+2\|X\|(\|A\|+\|M\|)
+\|X\|^2(\|A\|+\|M\|)
 \\&<
\varepsilon^{2}+2\varepsilon^{2}
\gamma(\alpha+\varepsilon^{2})+
\varepsilon^{4}\gamma^2(\alpha+\varepsilon^{2})
=\varepsilon^{2}+\varepsilon^{2}
\gamma(\alpha+\varepsilon^{2})(2+
\varepsilon^{2} \gamma)
 \\&<
\varepsilon^{2}(1+\gamma(\alpha+1)(2+ \gamma)) < \varepsilon,\\
\|Q\|
&\le\|N\|+2\|X\|(\|B\|+\|N\|)
+\|X\|^2(\|B\|+\|N\|)
 \\&<
\varepsilon^{2}(1+\gamma(\beta+1)(2+ \gamma)) < \varepsilon.
\end{align*}
\end{proof}

\section{Proof of the main theorem}
\label{sur}

\subsection{A method
of construction of
miniversal
deformations} \label{31section}

We give a method
of construction of
simplest miniversal
deformations, which
will be used in the
proof of Theorem
\ref{teo2}.

The deformation
\eqref{edr} is
universal in the sense
that every deformation
${\cal
B}(\sigma_1,\dots,\sigma_l)$
of $(A,B)$ has the form
${\cal
U}(\vec{\varphi}
(\sigma_1,\dots,\sigma_l)),$
where
$\varphi_{ij}(\sigma_1,\dots,\sigma_l)$
are convergent in a
neighborhood of $\vec
0$ power series such
that
$\varphi_{ij}(\vec 0)=
0$. Hence every
deformation ${\cal
B}(\sigma_1,\dots,\sigma_l)$
in Definition \ref{d}
can be replaced by
${\cal U}(\vec
{\varepsilon})$, which
proves the following
lemma.

\begin{lemma}\label{lem}
The following two
conditions are
equivalent for any
deformation ${\cal
A}(\delta_1,\dots,\delta_k)$
of pair of matrices $(A,B)$:
\begin{itemize}
  \item[\rm(i)]
The deformation ${\cal
A}(\delta_1,\dots,\delta_k)$
is versal.
  \item[\rm(ii)]
The deformation
\eqref{edr} is
equivalent to ${\cal
A}(\varphi_1(\vec{\varepsilon}),\dots,
\varphi_k(\vec{\varepsilon}))$
in which all
$\varphi_i(\vec{\varepsilon})$
are convergent in a
neighborhood of\/
$\vec 0$ power series
such that
$\varphi_i(\vec 0)=0$.
\end{itemize}
\end{lemma}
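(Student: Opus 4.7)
The plan is to prove the equivalence (i)$\Leftrightarrow$(ii) by handling the two directions separately. The conceptual core is the fact, already noted just above the lemma, that the deformation ${\cal U}(\vec\varepsilon)$ from \eqref{edr} is ``universal'' in the sense that every deformation of $(A,B)$ factors through it by a substitution of power series. Once this is made precise, each implication is essentially one line of manipulation.

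For (i)$\Rightarrow$(ii), I would apply Definition \ref{d} directly to the particular deformation ${\cal U}(\vec\varepsilon)$. Since ${\cal U}$ is a deformation of $(A,B)$ (its parameter space is a neighborhood of $\vec 0$ in ${\mathbb C}^{2n^2}$, and ${\cal U}(\vec 0)=(A,B)$), versality of ${\cal A}$ yields convergent power series $\varphi_1(\vec\varepsilon),\dots,\varphi_k(\vec\varepsilon))$ vanishing at $\vec 0$ for which ${\cal U}(\vec\varepsilon)$ is equivalent to ${\cal A}(\varphi_1(\vec\varepsilon),\dots,\varphi_k(\vec\varepsilon))$. This is precisely the content of (ii).

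For (ii)$\Rightarrow$(i), let ${\cal B}(\mu_1,\dots,\mu_l)$ be an arbitrary deformation of $(A,B)$. The entries of the pair ${\cal B}(\vec\mu)-(A,B)$ are convergent power series in $\vec\mu$ vanishing at $\vec 0$, so setting $\varepsilon_{ij}$ (respectively $f_{ij}$) equal to the corresponding entry of ${\cal B}(\vec\mu)-(A,B)$ gives holomorphic functions $\vec\eta(\vec\mu)$ with $\vec\eta(\vec 0)=\vec 0$ such that ${\cal B}(\vec\mu)={\cal U}(\vec\eta(\vec\mu))$. Hypothesis (ii) provides a deformation ${\cal I}(\vec\varepsilon)$ of $I_n$ realizing equivalence of ${\cal U}(\vec\varepsilon)$ with ${\cal A}(\varphi_1(\vec\varepsilon),\dots,\varphi_k(\vec\varepsilon))$. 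Substituting $\vec\varepsilon=\vec\eta(\vec\mu)$ on both sides of \eqref{kft} yields equivalence of ${\cal B}(\vec\mu)={\cal U}(\vec\eta(\vec\mu))$ with ${\cal A}(\varphi_1(\vec\eta(\vec\mu)),\dots,\varphi_k(\vec\eta(\vec\mu)))$, implemented by the deformation ${\cal I}(\vec\eta(\vec\mu))$ of $I_n$. Each composite $\varphi_i\circ\vec\eta$ is a convergent power series vanishing at $\vec 0$, so ${\cal A}$ is versal.

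The only technical point to check is that all the substitutions are legitimate on a common neighborhood of $\vec 0$, so that the deformation equivalence \eqref{kft} holds as an honest identity of holomorphic germs rather than a merely formal one. This is routine: a convergent power series vanishing at the origin, composed with another such series, is again convergent on a sufficiently small neighborhood, so I expect no real obstacle beyond shrinking neighborhoods at each stage.
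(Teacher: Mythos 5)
Your proof is correct and follows essentially the same route as the paper, which simply observes that every deformation ${\cal B}(\vec\mu)$ factors as ${\cal U}(\vec\varphi(\vec\mu))$ and declares the lemma proved; you merely spell out the two implications (applying Definition \ref{d} to ${\cal U}$ itself, and composing the factorization with the equivalence from (ii)) that the paper leaves implicit.
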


If $\rm U$ is a subspace
of a vector space $\rm V$,
then each set $v+\rm U$
with $v\in \rm V$ is
called a \emph{coset
of \/$\rm U$ in $\rm V$}.

\begin{lemma}
 \label{t2.1}
Let $(A,B)\in ({\mathbb
C}^{\,\hat{n}\times \hat{n}}_c, {\mathbb
C}^{\,\hat{n}\times \hat{n}}_c)$ and
let $\cal D$ be a pair of $(0,*)$ matrices of the size
$\hat{n}\times \hat{n}$. The
following are
equivalent:
\begin{itemize}
  \item[\rm(i)]
The deformation
$(A,B)+{\cal D}(\vec{\varepsilon})$
defined in \eqref{a2z}
is miniversal.

  \item[\rm(ii)]
The vector space
$({\mathbb
C}^{\,\hat{n}\times \hat{n}}_c, {\mathbb
C}^{\,\hat{n}\times \hat{n}}_c)$
decomposes into the sum
\begin{equation}\label{a4}
({\mathbb C}^{\,{\hat{n}}\times
{\hat{n}}}_c, {\mathbb
C}^{\,\hat{n}\times \hat{n}}_c)=T_{(A,B)} + {\cal
D}({\mathbb C}), \quad T_{(A,B)} \cap {\cal
D}({\mathbb C}) = \{ (A,B) \}.
\end{equation}

  \item[\rm(iii)]
Each coset of
$T_{(A,B)}$
in $({\mathbb
C}^{\,\hat{n}\times \hat{n}}_c, {\mathbb
C}^{\,\hat{n}\times \hat{n}}_c)$
contains exactly one
matrix pair of the form
${\cal D}$.
\end{itemize}
\end{lemma}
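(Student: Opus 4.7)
The equivalence (ii) $\Leftrightarrow$ (iii) is the general linear-algebraic fact that a subspace $W$ of a vector space $V$ satisfies $V = U \oplus W$ if and only if every coset of $U$ meets $W$ in exactly one point; apply this with $V = ({\mathbb C}^{n\times n}_c)^2$, $U = T(A,B)$, $W = {\cal D}({\mathbb C})$, noting that the matrix pairs ``of the form ${\cal D}$'' are by definition the elements of ${\cal D}({\mathbb C})$. It remains to connect (i) with (ii), which I would do through Lemma \ref{lem}.

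For (ii) $\Rightarrow$ (i), by Lemma \ref{lem} it suffices to exhibit a deformation ${\cal I}(\vec\varepsilon) = I_n + C(\vec\varepsilon)$ of $I_n$ and power series $\vec\varphi(\vec\varepsilon)$ vanishing at $\vec 0$ such that
\begin{equation*}
{\cal I}(\vec\varepsilon)^T\, {\cal U}(\vec\varepsilon)\, {\cal I}(\vec\varepsilon) = (A,B) + {\cal D}(\vec\varphi(\vec\varepsilon))
\end{equation*}
in a neighborhood of $\vec 0$, where ${\cal U}$ is the universal deformation \eqref{edr}. I would apply the analytic implicit function theorem to the holomorphic map
\begin{equation*}
\Phi(C, \vec\varphi, \vec\varepsilon) := (I_n + C)^T\, {\cal U}(\vec\varepsilon)\,(I_n + C) - (A,B) - {\cal D}(\vec\varphi)
\end{equation*}
taking values in $({\mathbb C}^{n\times n}_c)^2$. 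Its derivative at the origin with respect to $(C, \vec\varphi)$ is the linear map $(\dot C, \dot{\vec\varphi}) \mapsto \dot C^T(A,B) + (A,B)\dot C - {\cal D}(\dot{\vec\varphi})$, whose image is precisely $T(A,B) + {\cal D}({\mathbb C})$, equal to $({\mathbb C}^{n\times n}_c)^2$ by (ii). The map is thus a submersion at the origin, and the implicit function theorem produces holomorphic $C(\vec\varepsilon), \vec\varphi(\vec\varepsilon)$ vanishing at $\vec 0$ with $\Phi(C(\vec\varepsilon),\vec\varphi(\vec\varepsilon),\vec\varepsilon)\equiv 0$. Minimality of the parameter count is then automatic: any versal deformation has at least $\dim({\mathbb C}^{n\times n}_c)^2 - \dim T(A,B) = \dim {\cal D}({\mathbb C})$ parameters by a standard transversality count, and the deformation above achieves this.

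For (i) $\Rightarrow$ (ii), I argue by contrapositive. If $T(A,B) + {\cal D}({\mathbb C})$ is a proper subspace of $({\mathbb C}^{n\times n}_c)^2$, choose $(E_0,E_0')$ outside it; by Definition \ref{d}, versality of $(A,B) + {\cal D}(\vec\varepsilon)$ applied to the one-parameter deformation $(A+tE_0, B+tE_0')$ would yield holomorphic ${\cal S}(t)$ with ${\cal S}(0)=I_n$ and holomorphic $\vec\varphi(t)$ with $\vec\varphi(0)=\vec 0$ satisfying ${\cal S}(t)^T(A+tE_0, B+tE_0'){\cal S}(t) = (A,B) + {\cal D}(\vec\varphi(t))$; differentiating at $t=0$ would place $(E_0,E_0')$ into $T(A,B) + {\cal D}({\mathbb C})$, a contradiction. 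If instead the sum is not direct, then $\dim{\cal D}({\mathbb C}) > \dim({\mathbb C}^{n\times n}_c)^2 - \dim T(A,B)$; applying the already-proved direction (ii) $\Rightarrow$ (i) to any parametric deformation of $(A,B)$ whose parameter space is a complement of $T(A,B)$ produces a versal deformation with strictly fewer parameters, contradicting the miniversality of $(A,B) + {\cal D}(\vec\varepsilon)$.

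The principal technical hurdle is the submersion step in (ii) $\Rightarrow$ (i): the formal order-by-order construction of $C(\vec\varepsilon)$ and $\vec\varphi(\vec\varepsilon)$ is transparent from the direct-sum decomposition, but establishing genuine holomorphy requires a careful setup of the analytic implicit function theorem in the matrix-pair setting (in particular, restricting $C$ to a complement of the kernel of the linearization to get a genuine local isomorphism).
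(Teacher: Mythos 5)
Your proposal is correct, but it takes a genuinely different route from the paper for the nontrivial equivalence (i) $\Leftrightarrow$ (ii). The paper's own proof of Lemma \ref{t2.1} is very short: it identifies $T(A,B)$ as the tangent space to the congruence orbit $(A,B)^{GL_n}$ at $(A,B)$ (the same $\varepsilon$-expansion you use) and then simply invokes Arnold's theorem that a transversal of minimal dimension to the orbit is a miniversal deformation, citing \cite[Section 1.6]{arn2}; the self-contained justification of versality is deferred to Section \ref{sect4}, where instead of the implicit function theorem the authors run an explicit Newton-type iteration (Lemmas \ref{lem1}--\ref{lem3c}): at each step a congruence $I_n+X_i$ kills the off-$\cal D$ part of the perturbation up to quadratically small error, the errors $\varepsilon_{i+1}=m\varepsilon_i^2$ are controlled by Frobenius-norm estimates, and the infinite product $\prod(I_n+X_i)$ is shown to converge to the holomorphic reducing matrix $S$. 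Your submersion/implicit-function-theorem argument replaces that entire iterative machinery in one stroke, and your differentiation-at-$t=0$ argument for (i) $\Rightarrow$ (ii) together with the dimension count makes explicit the "only minimal transversals are miniversal" half that the paper leaves to the citation. Both approaches are sound; the paper's iteration has the advantage of being elementary and of exhibiting $S$ concretely (which the authors want for their constructive Section 4), while yours is shorter but, as you note, requires the standard care of restricting to a complement of the kernel of the linearization before applying the holomorphic implicit function theorem. One small point worth making explicit in your write-up: for the direct sum \eqref{a4} to live inside $({\mathbb C}^{n\times n}_c,{\mathbb C}^{n\times n}_c)$ the space ${\cal D}({\mathbb C})$ must be read as the skew-symmetric pairs of the form $\cal D$ (stars occurring in $(i,j),(j,i)$ pairs with opposite signs, cf. Remark \ref{indep}); your dimension counts should be stated with that convention.
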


\begin{proof}
Define the action of
the group
$GL_{\hat{n}}(\mathbb C)$ of
nonsingular ${\hat{n}}$-by-${\hat{n}}$
matrices on the space
$({\mathbb
C}^{\,\hat{n}\times \hat{n}}_c,{\mathbb
C}^{\,\hat{n}\times \hat{n}}_c)$ by
\[
(A,B)^S=S^T (A,B)S,\qquad
(A,B)\in ({\mathbb
C}^{\,\hat{n}\times \hat{n}}_c,{\mathbb
C}^{\,\hat{n}\times \hat{n}}_c),\quad
S\in GL_{\hat{n}}(\mathbb C).
\]
The orbit $(A,B)^{GL_{\hat{n}}}$
of $(A,B)$ under this
action consists of all pairs
of skew-symmetric
matrices that are
congruent to the pair $(A,B)$.

The space $T_{(A,B)}$
is the tangent space
to the orbit
$(A,B)^{GL_{\hat{n}}}$ at the
point $(A,B)$ (see \eqref{tangatp}). 
Hence ${\cal
D}(\vec
{\varepsilon})$ is
transversal to the
orbit $(A,B)^{GL_{\hat{n}}}$ at
the point $(A,B)$ if
\[
({\mathbb C}^{\,\hat{n}\times \hat{n}}_c,{\mathbb
C}^{\,\hat{n}\times \hat{n}}_c)=T_{(A,B)} +
{\cal D}({\mathbb C})
\]
(see definitions in
\cite[\S\,29]{arn3};
two subspaces of a
vector space are
called
\emph{transversal} if
their sum is equal to
the whole space).

This proves the
equivalence of (i) and
(ii) since a
transversal (of the
minimal dimension) to
the orbit is a
(mini)versal
deformation
\cite[Section
1.6]{arn2}. The
equivalence of (ii)
and (iii) is obvious.
\end{proof}

Due to the versality
of each deformation
$(A,B)+{\cal D}(\vec
{\varepsilon})$ in
which ${\cal D}$
satisfies \eqref{a4}:
there is a
deformation $I(\vec
{\varepsilon})$ of the
identity matrix such
that $(A,B)+{\cal
D}(\vec{\varepsilon})=
I(\vec{\varepsilon})^{T}
{\cal
U}(\vec{\varepsilon})
I(\vec{\varepsilon})$,
where ${\cal U}(\vec
{\varepsilon})$ is
defined in
\eqref{edr}.

Thus, a simplest
miniversal deformation
of $(A,B)\in ({\mathbb
C}^{\,\hat{n}\times \hat{n}}_c, {\mathbb
C}^{\,\hat{n}\times \hat{n}}_c)$ can
be constructed as
follows. Let
$(T_1,\dots,T_r)$ be a
basis of the space
$T_{(A,B)}$,
and let
$(E_1,\dots,E_{{\hat{n}}({\hat{n}}-1)})$ be
the basis of $({\mathbb
C}^{\,\hat{n}\times \hat{n}}_c, {\mathbb
C}^{\,\hat{n}\times \hat{n}}_c)$
in which every $E_k$ is either of the form 
$(E_{ij},0)$ or $(0,E_{i'j'})$. Removing
from the sequence
$(T_1,\dots, T_r,E_1,\dots,E_{{\hat{n}}({\hat{n}}-1)})$
every pair of matrices that is a
linear combination of
the preceding
matrices, we obtain a
new basis $(T_1,\dots, T_r,
E_{i_1},\dots,E_{i_k})$
of the space $({\mathbb
C}^{\,\hat{n}\times \hat{n}}_c, {\mathbb
C}^{\,\hat{n}\times \hat{n}}_c)$. By
Lemma \ref{t2.1}, the
deformation
\begin{align*}
{\cal
A}(\varepsilon_1,\dots,
\varepsilon_{k_1},\varepsilon'_1, \dots, \varepsilon'_{k_2})&=
(A,B)+\varepsilon_1E_{1}+\dots+\varepsilon_{k_1}E_{i_{k_1}} +\varepsilon'_1E_{i_{k_1+1}}+\dots+\varepsilon'_{k_2}E_{i_{k}}\\
&=
(A,B)+\varepsilon_1(E_{i_1,j_1},0)+\dots+\varepsilon_{k_1}(E_{i_{k_1}j_{k_1}},0) \\ 
&+\varepsilon'_1(0,E_{i_{k_1+1},j_{k_1+1}})+\dots+\varepsilon'_{k_2}(0,E_{i_{k},j_{k}}),
\end{align*}
where $k_1+k_2=k$, is miniversal.

For each pair of skew-symmetric $\hat{m}\times \hat{m}$ 
matrices $(A_1,B_1)$ and each pair of 
skew-symmetric $\hat{n}\times \hat{n}$ matrices
$(A_2,B_2)$, define the vector
spaces
\begin{align}\label{neh}
 V(A_1,B_1)&:=\{ S^T(A_1,B_1)+(A_1,B_1)S, \text{ where } S\in
{\mathbb C}^{{\hat{m}}\times {\hat{m}}} \},\\  
\label{neh1}
\begin{split}
 V((A_1,B_1),(A_2,B_2))&:=\{(
R^T(A_2,B_2)+(A_1,B_1)S,
S^T(A_1,B_1)+(A_2,B_2)R), \\ 
&\text{where } S\in {\mathbb C}^{{\hat{m}}\times {\hat{n}}} \text{ and } R\in
 {\mathbb C}^{{\hat{n}}\times {\hat{m}}} \}.
 \end{split}
\end{align}

\begin{lemma}\label{thekd}
Let
$(A,B)=(A_1,B_1)\oplus\dots\oplus
(A_t, B_t)$ be a
block-diagonal matrix
in which every $(A_i,B_i)$
is $n_i\times n_i$.
Let $\cal D$ be a pair of $(0,*)$ matrices of the size of $(A,B)$.
Partitioning $\cal D$ into
blocks $(D_{ij}, D'_{ij})$
conformably to the
partitioning of $(A,B)$
$($see
\eqref{grsd}$)$. Then
$(A,B)+{\cal D}(E,E')$ is a
simplest miniversal (skew-symmetric) 
deformation of $(A,B)$ under
congruence if and only
if
\begin{itemize}
  \item[\rm(i)]
every coset of\/
$V(A_i,B_i)$ in $({\mathbb
C}^{n_i\times n_i}_c, {\mathbb
C}^{n_i\times n_i}_c)$
contains exactly one
matrix of the form
$(D_{ii},D'_{ii})$, and

  \item[\rm(ii)]
every coset of
$V((A_i,B_i),(A_j,B_j))$ in
$({\mathbb
C}^{n_i\times
n_j}, {\mathbb
C}^{n_i\times
n_j}) \oplus  ({\mathbb
C}^{n_j\times n_i}, {\mathbb
C}^{n_j\times n_i})$
contains exactly two pairs of
matrices $((W_1,W_2),(-W_1^T,-W_2^T))$ in
which $(W_1,W_2)$ is of the
form $(D_{ij},D'_{ij})$ and correspondingly $(-W_1^T,-W_2^T)$ is of the form $(D_{ji},D'_{ji})=(-D_{ij}^T,-D_{ij}^{'T})$.
\end{itemize}
\end{lemma}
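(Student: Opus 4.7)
The plan is to reduce Lemma~\ref{thekd} to Lemma~\ref{t2.1} by a block decomposition of $T(A,B)$ and of the ambient space $({\mathbb C}^{n\times n}_c,{\mathbb C}^{n\times n}_c)$ inherited from the block structure $(A,B)=(A_1,B_1)\oplus\dots\oplus(A_t,B_t)$.

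First I would partition $C\in{\mathbb C}^{n\times n}$ into blocks $C=(C_{ij})$ conformally with $(A,B)$ and compute $C^T(A,B)+(A,B)C$ block by block. Because $(A,B)$ is block diagonal, the $(i,j)$ block of this expression is simply $C_{ji}^T(A_j,B_j)+(A_i,B_i)C_{ij}$. In particular, the diagonal $(i,i)$ block depends only on $C_{ii}$ and, as $C_{ii}$ varies, ranges exactly over $V(A_i,B_i)$ in the sense of \eqref{neh}; while for $i<j$, the pair of $(i,j)$ and $(j,i)$ blocks depends only on $(C_{ij},C_{ji})$, and with the identification $S=C_{ij}$, $R=C_{ji}$ it ranges exactly over $V((A_i,B_i),(A_j,B_j))$ in the sense of \eqref{neh1}.

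From this computation I would conclude that $T(A,B)$ splits as the block-diagonal direct sum $\bigoplus_i V(A_i,B_i)$ together with the off-diagonal direct sum $\bigoplus_{i<j}V((A_i,B_i),(A_j,B_j))$, each summand sitting in its corresponding block positions. The ambient space $({\mathbb C}^{n\times n}_c,{\mathbb C}^{n\times n}_c)$ decomposes in the same pattern: the $(i,i)$ blocks live independently in $({\mathbb C}^{n_i\times n_i}_c,{\mathbb C}^{n_i\times n_i}_c)$, while for $i<j$ the $(i,j)$ and $(j,i)$ blocks are coupled by the skew relation $(\,\cdot\,)_{ji}=-(\,\cdot\,)_{ij}^T$. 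The $(0,*)$ pair $\cal D$ decomposes the same way. By Lemma~\ref{t2.1} miniversality is equivalent to $({\mathbb C}^{n\times n}_c,{\mathbb C}^{n\times n}_c)=T(A,B)\oplus{\cal D}({\mathbb C})$, and the block decomposition makes this direct-sum criterion split into independent conditions on each block type. On the diagonal blocks the condition is exactly (i). On the off-diagonal blocks for $i<j$, I would translate the skew-folded ``unique $\cal D$-form representative'' requirement into the larger unfolded ambient $({\mathbb C}^{n_i\times n_j},{\mathbb C}^{n_i\times n_j})\oplus({\mathbb C}^{n_j\times n_i},{\mathbb C}^{n_j\times n_i})$ of~(ii); the two representatives of form $\cal D$ correspond to reading off the data once at the $(i,j)$ position as $({\cal D}_{ij},{\cal D}'_{ij})$ and once at the $(j,i)$ position as the forced partner $({\cal D}_{ji},{\cal D}'_{ji})=(-{\cal D}_{ij}^T,-{\cal D}_{ij}^{'T})$.

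The main obstacle I anticipate is precisely this last matching step: carefully translating the skew-constrained coset problem inherited from the skew-symmetric ambient into the unfolded coset problem formulated in~(ii). In particular, I would verify that the natural involution $((W_1,W_2),(Q_1,Q_2))\mapsto((-Q_1^T,-Q_2^T),(-W_1^T,-W_2^T))$ preserves both $V((A_i,B_i),(A_j,B_j))$ and the $\cal D$-form subspace, and then use this involution to account for the two prescribed representatives in each unfolded coset. Once this bookkeeping between the folded and unfolded off-diagonal pictures is pinned down, the block decomposition of the first step converts the direct-sum criterion of Lemma~\ref{t2.1} into the conjunction of (i) and (ii), which completes the argument.
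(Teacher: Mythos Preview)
Your proposal is correct and follows essentially the same route as the paper: both reduce to Lemma~\ref{t2.1}, partition everything conformally with the block decomposition of $(A,B)$, compute that the $(i,j)$ block of $C^T(A,B)+(A,B)C$ depends only on $C_{ij}$ and $C_{ji}$, and thereby split the global direct-sum/coset criterion into the independent diagonal conditions~(i) and the off-diagonal conditions~(ii). The only cosmetic difference is that the paper works via the coset formulation of Lemma~\ref{t2.1}(iii) and writes out the block equations \eqref{djh}--\eqref{djhh} explicitly, whereas you phrase the same splitting through the direct-sum formulation of Lemma~\ref{t2.1}(ii) and make the skew involution $((W_1,W_2),(Q_1,Q_2))\mapsto((-Q_1^T,-Q_2^T),(-W_1^T,-W_2^T))$ explicit to explain the ``exactly two'' in~(ii); this is precisely the bookkeeping the paper leaves implicit.
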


\begin{proof}
By Lemma
\ref{t2.1}(iii),
$(A,B)+{\cal D}(\vec
{\varepsilon})$ is a
simplest miniversal
deformation of $(A,B)$ if
and only if for each
$(C,C')\in({\mathbb
C}^{\hat{n}\times \hat{n}}_c,{\mathbb
C}^{\,\hat{n}\times \hat{n}}_c)$ the
coset $(C,C')+T_{(A,B)}$
contains exactly one
$(D,D')$ of the form ${\cal
D}$, that is, 
\begin{equation}\label{kid}
(D,D')=(C,C')+S^T(A,B)+(A,B)S\in{\cal
D}(\mathbb C)\qquad
\text{with
$S\in{\mathbb
C}^{\hat{n}\times \hat{n}}$.}
\end{equation}
Partition $(D,D'),\ (C,C')$, and
$S$ into blocks
conformably to the
partitioning of $(A,B)$. By
\eqref{kid}, for each
$i$ we have
$(D_{ii},D'_{ii})=(C_{ii},C'_{ii})+
S_{ii}^T(A_{i},B_{i})
+(A_{i},B_{i})S_{ii}$, and
for all $i$ and $j$
such that $i<j$ we
have
{\small
\begin{multline}\label{mht}
\left(
\begin{bmatrix}
D_{ii}&D_{ij}
 \\ D_{ji}&D_{jj}
\end{bmatrix},
\begin{bmatrix}
D'_{ii}&D'_{ij}
 \\ D'_{ji}&D'_{jj}
\end{bmatrix}
\right)
=
\left(
\begin{bmatrix}
C_{ii}&C_{ij}
 \\ C_{ji}&C_{jj}
\end{bmatrix},
\begin{bmatrix}
C'_{ii}&C'_{ij}
 \\ C'_{ji}&C'_{jj}
\end{bmatrix}
\right) \\
+ \begin{bmatrix}
S_{ii}^T&S_{ji}^T
 \\ S_{ij}^T&S_{jj}^T
\end{bmatrix}
\left(
\begin{bmatrix}
A_i&0
 \\ 0& A_j
\end{bmatrix},
\begin{bmatrix}
B_i&0
 \\ 0& B_j
\end{bmatrix}
\right)
+
\left(
\begin{bmatrix}
A_i&0
 \\ 0& A_j
\end{bmatrix},
\begin{bmatrix}
B_i&0
 \\ 0& B_j
\end{bmatrix}
\right)
\begin{bmatrix}
S_{ii}&S_{ij}
 \\ S_{ji}&S_{jj}
\end{bmatrix}.
\end{multline}
}
Thus, \eqref{kid} is
equivalent to the
conditions
\begin{multline}\label{djh}
(D_{ii},D'_{ii})=(C_{ii},C'_{ii})
+  S_{ii}^T(A_i,B_i)+(A_i,B_i)S_{ii}\in{\cal
D}_{ii}(\mathbb
C),  1\le i\le t,
\end{multline}
\begin{multline}\label{djhh}
((D_{ij},D'_{ij}),(D_{ji},D'_{ji}))=
((C_{ij},C'_{ij}), (C_{ji},C'_{ji})) \\
+ ((S_{ji}^TA_j+A_iS_{ij},S_{ji}^TB_j+B_iS_{ij}),
(S_{ij}^TA_i+A_jS_{ji},S_{ij}^TB_i+B_jS_{ji})) \\
  \in {\cal
D}_{ij}(\mathbb
C)\oplus {\cal
D}_{ji}(\mathbb C), \quad 1\le i<j\le t. 
\end{multline}
Hence for each
$(C,C')\in ({\mathbb
C}^{\hat{n}\times \hat{n}}_c,{\mathbb
C}^{\hat{n}\times \hat{n}}_c)$ there
exists exactly one
$(D,D')\in{\cal D}(\mathbb
C)$ of the
form \eqref{kid} if
and only if
\begin{itemize}
  \item[(i$'$)]
for each
$(C_{ii},C'_{ii})\in({\mathbb
C}^{n_i\times n_i}_c,{\mathbb
C}^{n_i\times n_i}_c)$
there exists exactly
one $(D_{ii},D'_{ii})\in{\cal
D}_{ii}(\mathbb C)$ of the form
\eqref{djh}, and
  \item[(ii$'$)]
for each $((C_{ij},C'_{ij}),
(C_{ji},C'_{ji}))\in ({\mathbb
C}^{n_i\times
n_j},{\mathbb
C}^{n_i\times
n_j})\oplus ({\mathbb
C}^{n_j\times n_i},{\mathbb
C}^{n_j\times n_i})$
there exists exactly
one
$((D_{ij},D'_{ij}),(D_{ji},D'_{ji}))\in
{\cal D}_{ij}(\mathbb
C)\oplus {\cal
D}_{ji}(\mathbb C)$ of
the form \eqref{djhh}.
\end{itemize}
This proves the lemma.
\end{proof}
{\begin{corollary}\label{the}
\textcolor{black}{
In the notation of
Lemma \ref{thekd},
$(A,B)+{\cal D}(\vec
{\varepsilon})$ is a
miniversal deformation
of $(A,B)$ if and only if
each pair of submatrices of the
form
\begin{equation*}\label{a8}
\left(
\begin{bmatrix}
  A_i+D_{ii}(\vec
{\varepsilon}) &
   D_{ij}(\vec
{\varepsilon})\\
   D_{ji}(\vec
{\varepsilon}) &A_j+
D_{jj}(\vec
{\varepsilon})
\end{bmatrix}
\begin{bmatrix}
  B_i+D'_{ii}(\vec
{\varepsilon}) &
  D'_{ij}(\vec
{\varepsilon})\\
 D'_{ji}(\vec
{\varepsilon}) &B_j+
D'_{jj}(\vec
{\varepsilon})
\end{bmatrix}
\right)\quad \text{with } i<j,
\end{equation*}
is a miniversal
deformation of the pair
$(A_i\oplus A_j,B_i\oplus B_j)$.}
\end{corollary}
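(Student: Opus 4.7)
The plan is to derive the Corollary as an immediate consequence of Lemma~\ref{thekd} by observing that the conditions (i) and (ii) in that lemma are \emph{local}: each one involves either a single diagonal block index or a single pair $(i,j)$ with $i<j$. So the statement amounts to a reorganization of the same list of conditions by grouping them into pairs.

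First, I would apply Lemma~\ref{thekd} to the full decomposition $(A,B)=(A_1,B_1)\oplus\dots\oplus(A_t,B_t)$. This yields that $(A,B)+{\cal D}(\vec{\varepsilon})$ is a simplest miniversal deformation if and only if
\begin{itemize}
\item[(D$_i$)] every coset of $V(A_i,B_i)$ contains exactly one pair of the form $({\cal D}_{ii},{\cal D}'_{ii})$, for each $i=1,\dots,t$, and
\item[(O$_{ij}$)] every coset of $V((A_i,B_i),(A_j,B_j))$ contains exactly two pairs of the required skew-symmetric form, for each $1\le i<j\le t$.
\end{itemize}

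Next, I would apply Lemma~\ref{thekd} once more, but now to each two-block sum $(A_i\oplus A_j,B_i\oplus B_j)$ together with the $2\times 2$ block submatrix of ${\cal D}$ indexed by $\{i,j\}$. For this two-summand decomposition Lemma~\ref{thekd} says that the indicated submatrix pair is a simplest miniversal deformation of $(A_i\oplus A_j,B_i\oplus B_j)$ exactly when the diagonal condition holds at both positions (which are (D$_i$) and (D$_j$)) and the off-diagonal condition for the pair $(i,j)$ holds (which is (O$_{ij}$)).

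Finally, I would observe that as $(i,j)$ ranges over $1\le i<j\le t$, the collection of all two-summand conditions covers each (D$_k$) (appearing in every pair that uses index $k$, at least once provided $t\ge2$) and each (O$_{ij}$) exactly once. Thus the conjunction of the miniversality conditions for all $2\times 2$ block submatrices is logically equivalent to the conjunction of (D$_1$),\dots,(D$_t$) together with all (O$_{ij}$), which by Lemma~\ref{thekd} is equivalent to miniversality of the full deformation $(A,B)+{\cal D}(\vec{\varepsilon})$. I do not expect any real obstacle here; the only mild point to check is the degenerate case $t=1$, where the corollary is vacuous and the statement reduces to tautology. The whole argument is essentially a bookkeeping step on top of Lemma~\ref{thekd}.
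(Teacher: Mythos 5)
Your argument is correct and is precisely the intended one: the paper states this corollary without proof as an immediate consequence of Lemma~\ref{thekd}, whose conditions (i) and (ii) are indexed by single blocks and by pairs $i<j$, so regrouping them over two-summand subproblems is exactly the bookkeeping you describe. Your remark about the degenerate case $t=1$ (and the implicit assumption $t\ge 2$) is a fair observation that the paper itself leaves unaddressed.
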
}

We are ready to prove
Theorem \ref{teo2} now.
Each $\mathcal X_i$ in
\eqref{gto} is of the
form $\mathcal H_n(\lambda), \mathcal K_n$, or
$\mathcal L_n$, and so there
are 9 types of pairs
${\cal D}(\mathcal X_i)$ and
${\cal D}(\mathcal X_i, \mathcal X_j)$
with $i<j$; they are
given in 
\eqref{Hdef}--\eqref{ktlm}.
It suffices to prove
that the pairs
\eqref{Hdef}--\eqref{ktlm}
satisfy the conditions
(i) and (ii) of Lemma
\ref{thekd}.

\subsection{Diagonal blocks of $\cal D$}

Fist we verify that
the diagonal blocks of
$\cal D$ defined in
part (i) of Theorem
\ref{teo2} satisfy the
condition (i) of Lemma
\ref{thekd}.

\subsubsection{Diagonal blocks
${\cal D}(\mathcal H_{n}(\lambda))$ and ${\cal D}(\mathcal K_{n})$}
\label{dhndkn}

We consider the pairs of blocks $\mathcal H_{n}(\lambda)$ and $\mathcal K_n$.

Due to Lemma
\ref{thekd}(i), it
suffices to prove that
each pair of skew-symmetric
$2n$-by-$2n$ matrices
$(A,B)=([A_{ij}]_{i,j=1}^2,[B_{ij}]_{i,j=1}^2)$
can be reduced to
exactly one pair of matrices of
the form \eqref{Hdef}
by adding
{\small
\begin{equation}\label{moh1}
\begin{split}
\Delta (A,B)&=(\Delta A, \Delta B)= \left( \begin{bmatrix}
\Delta A_{11}& \Delta A_{12}
 \\ \Delta A _{21}& \Delta A_{22}
\end{bmatrix},\begin{bmatrix}
\Delta B_{11}& \Delta B_{12}
 \\ \Delta B_{21}& \Delta B_{22}
\end{bmatrix} \right)\\ &=
\begin{bmatrix}
S_{11}^T&S_{21}^T
 \\ S_{12}^T&S_{22}^T
\end{bmatrix}
\bigg( \begin{bmatrix}
0&I_n
 \\ -I_n&0
\end{bmatrix},
\begin{bmatrix}
0&J_n(\lambda)
 \\ -J_n(\lambda)^T&0
\end{bmatrix}
\bigg)
 \\ &+ \bigg( \begin{bmatrix} 0&I_n
 \\ -I_n&0
\end{bmatrix},
\begin{bmatrix}
0&J_n(\lambda)
 \\ -J_n(\lambda)^T&0
\end{bmatrix}  \bigg)
\begin{bmatrix}
S_{11}&S_{12}
 \\ S_{21}&S_{22}
\end{bmatrix}
    \\&=
\bigg( \begin{bmatrix}
S_{21}-S_{21}^T&
S_{11}^T+S_{22}\\
-S_{11}-S_{22}^T&
S_{12}^T-S_{12}
\end{bmatrix}, \\&
\begin{bmatrix}
-S_{21}^TJ_n(\lambda)^T+J_n(\lambda)S_{21}&
S_{11}^TJ_n(\lambda)+J_n(\lambda)S_{22}\\
-S_{22}^TJ_n(\lambda)^T-J_n(\lambda)^TS_{11}&
S_{12}^TJ_n(\lambda)-J_n(\lambda)^TS_{12}
\end{bmatrix} \bigg),
\end{split}
\end{equation}}\noindent in which
$S=[S_{ij}]_{i,j=1}^2$
is an arbitrary
$2n$-by-$2n$ matrix. Due to the skew-symmetry there are three pairs of $n$-by-$n$ blocks in \eqref{moh1} that can be treated independently. For any $X$ we have $$-XJ_n(\lambda)^T+J_n(\lambda)X=
-X(\lambda I+J_n(0))^T+(\lambda I+J_n(0))X=-XJ_n(0)^T+J_n(0)X.$$
Thus, without loss of generality, we can assume that $\lambda=0$.
Therefore the deformation of $\mathcal K_{n}$ is equal to the
deformation of $\mathcal H_{n}(\lambda)$ up to the permutation of
matrices.

First we consider the pair of blocks
$\Delta (A_{11},B_{11})=(S_{21}-S_{21}^T,-S_{21}^TJ_n(0)^T+J_n(0)S_{21})$
in which $S_{21}$ is an arbitrary $n$-by-$n$ matrix. Obviously, by 
adding $\Delta A_{11}=S_{21}-S_{21}^T$ we reduce $A_{11}$ to zero. To
preserve $A_{11},$ we must hereafter take $S_{21}$ such that $S_{21}-S_{21}^T=0,$ i.e., $S_{21}$ is symmetric.
We reduce $B_{11}$ by adding
$\Delta B_{11} = -S_{21}^TJ_n(0)^T+J_n(0)S_{21}$,
\begin{equation} \label{qwert11}
\footnotesize{
\begin{split}
&\Delta B_{11}=\\&=-\begin{bmatrix}
s_{11}&s_{12}&s_{13}&\ldots & s_{1n}\\
s_{12}&s_{22}&s_{23}&\ldots & s_{2n}\\
s_{13}&s_{23}&s_{33}&\ldots & s_{3n}\\
\vdots&\vdots&\vdots&\ddots& \vdots\\
s_{1n}&s_{2n}&s_{3n}&\ldots & s_{nn}\\
\end{bmatrix}
\begin{bmatrix}
0&&&0\\
1&0&&\\
&\ddots&\ddots & \\
0&&1 & 0\\
\end{bmatrix}
 +
\begin{bmatrix}
0&1&&0\\
&0&\ddots & \\
&&\ddots & 1\\
0&&& 0\\
\end{bmatrix}
\begin{bmatrix}
s_{11}&s_{12}&s_{13}&\ldots & s_{1n}\\
s_{12}&s_{22}&s_{23}&\ldots & s_{2n}\\
s_{13}&s_{23}&s_{33}&\ldots & s_{3n}\\
\vdots&\vdots&\vdots&\ddots& \vdots\\
s_{1n}&s_{2n}&s_{3n}&\ldots & s_{nn}\\
\end{bmatrix}
    \\&=
\begin{bmatrix}
0&s_{22}-s_{13}&s_{23}-s_{14}&\ldots & s_{2n}\\
-s_{22}+s_{13}&0&s_{33}-s_{24}&\ldots & s_{3n}\\
-s_{23}+s_{14}&-s_{33}+s_{24}&0&\ldots & s_{4n}\\
\vdots&\vdots&\vdots&\ddots& \vdots\\
-s_{2n}&-s_{3n}&-s_{4n}&\ldots & 0\\
\end{bmatrix}.
\end{split}
}
\end{equation}
We reduce $B_{11}$ anti-diagonal-wise and since $B_{11}$ is skew-symmetric, we just need to reduce the upper triangular part of $B_{11}$ and the lower triangular part will be reduced automatically. Let $b =(b_1, \ldots, b_{t-1})$ denote the elements of the upper half of the $k$-th anti-diagonal (counting from the top left corner) of $B_{11}$. Each of the first $(n-1)$ upper halfs of the anti-diagonals of $\Delta B_{11}$ is of the form 
$$s = 
\begin{cases}
(s_{2k}-s_{1,k+1}, s_{3,k-1}-s_{2k}, \ldots , s_{tt}-s_{t-1,t+1}), \ \ \text{if }
$k$ \text{ is even, } t = \frac{k+2}{2};  \\
(s_{2k}-s_{1,k+1}, s_{3,k-1}-s_{2k}, \ldots , s_{t,t+1}-s_{t-1,t+2}), \ \ \text{if } $k$ \text{ is odd, } t = \frac{k+1}{2},  
\end{cases}
$$ 
where $k=2, 3, \dots , n-1,$ (the first anti-diagonal is zero). 
Choosing the parameters $s_{ij}$ we want to make $s$ equal to $b$, i.e. we want to solve the system of linear equations
\begin{equation} \label{4sdf}
\small{
\left[
 \begin{matrix}
-1&1&&&0\\
&-1&1&&\\
&&\ddots&\ddots&\\
0&&&-1&1
\end{matrix}
\right]
\left[
 \begin{matrix}
s_{1,k+1} \\
s_{2k} \\
\vdots \\
s_{tt}
\end{matrix}
\right]
=
\left[
 \begin{matrix}
b_1 \\
b_2 \\
\vdots \\
b_{t-1}
\end{matrix}
\right],
}
\end{equation}
where $k$ is even (and the analogous system for $k$ being odd).
The system \eqref{4sdf} has a solution. 
Therefore, we can reduce each of the first $(n-1)$ anti-diagonals of $B_{11}$ to zero, by adding the corresponding anti-diagonals of $\Delta B_{11}$.

For each $k$-th upper parts 
of the last $n$ anti-diagonals we have the following systems of equations 
\begin{equation} \label{systsov}
\small{
\left[
 \begin{matrix}
 1&&&&0\\
-1&1&&&\\
&-1&1&&\\
&&\ddots&\ddots&\\
0&&&-1&1 \\
\end{matrix}
\right]
\left[
 \begin{matrix}
s_{2-n+k,n} \\
s_{3-n+k,n-1} \\
\vdots \\
s_{t'-1,t'+1}\\
s_{t't'}
\end{matrix}
\right]
=
\left[
 \begin{matrix}
b_1 \\
b_2 \\
\vdots \\
b_{t'-2}\\
b_{t'-1}
\end{matrix}
\right],
}
\end{equation}
where $k=n, n+1, \dots , 2n-2,$ (the last anti-diagonal is zero) and $t'=t-k+n$ and $t$ is defined as above. 
The system \eqref{systsov} has a solution. 
Therefore we can reduce the last $n$ anti-diagonals of $B_{11}$ to zero.
Altogether, we reduce $B_{11}$ to zero matrix by adding $\Delta B_{11}$.

The possibility of reducing $(A_{22},B_{22})$ to zero by adding $\Delta (A_{22},B_{22})=(S_{12}^T-S_{12},S_{12}^TJ_n(0)-J_n(0)^TS_{12})$ follows directly from the reduction of the blocks $(A_{11},B_{11})$.
We have $0=B_{11}-S_{21}^TJ_n(0)^T+J_n(0)S_{21}$ where $B_{11}$ is a skew-symmetric matrix. Multiplying
this equality by the $n$-by-$n$ flip matrix
\begin{equation} \label{zzz}
Z:= \begin{bmatrix}
 0&&1
 \\ &\udots&\\
 1&&0
 \end{bmatrix}
 \end{equation}
 from both sides and using that $Z^2=I$ and
 $ZJ_n(0)^TZ=J_n(0)$ we get
 $$0=ZB_{11}Z-ZS_{21}^TZJ_n(0)+J_n(0)^TZS_{21}Z.$$
This ensures that the pair of blocks $(A_{22},B_{22})$ can be set to zero
since $ZB_{11}Z$ and $ZS_{21}Z$ are arbitrary skew-symmetric and symmetric matrices,
respectively.

To the pair of blocks $(A_{21},B_{21})$ we can add
$\Delta (A_{21},B_{21})=(S_{11}^T+S_{22},S_{11}^TJ_n(0)+J_n(0)S_{22})$.
Adding $S_{11}^T+S_{22}$ we reduce $A_{21}$ to zero.
To preserve $A_{21}$, we must hereafter take $S_{11}$ and $S_{22}$
such that $S_{11}^T=-S_{22}$.
Thus we add $\Delta B_{21}=-S_{22}J_n(0)+J_n(0)S_{22}$, with any matrix $S_{22}$,
\begin{equation} \label{45form}
\footnotesize{
\begin{split}
&\Delta B_{21}=\\&=-\begin{bmatrix}
s_{11}&s_{12}&s_{13}&\ldots & s_{1n}\\
s_{21}&s_{22}&s_{23}&\ldots & s_{2n}\\
s_{31}&s_{32}&s_{33}&\ldots & s_{3n}\\
\vdots&\vdots&\vdots&\ddots& \vdots\\
s_{n1}&s_{n2}&s_{n3}&\ldots & s_{nn}\\
\end{bmatrix}
\begin{bmatrix}
0&1&&\\
&0&\ddots & \\
&&\ddots & 1\\
&&& 0\\
\end{bmatrix}
 +
\begin{bmatrix}
0&1&&\\
&0&\ddots & \\
&&\ddots & 1\\
&&& 0\\
\end{bmatrix}
\begin{bmatrix}
s_{11}&s_{12}&s_{13}&\ldots & s_{1n}\\
s_{21}&s_{22}&s_{23}&\ldots & s_{2n}\\
s_{31}&s_{32}&s_{33}&\ldots & s_{3n}\\
\vdots&\vdots&\vdots&\ddots& \vdots\\
s_{n1}&s_{n2}&s_{n3}&\ldots & s_{nn}\\
\end{bmatrix}
    \\&=
\begin{bmatrix}
s_{21}&s_{22}-s_{11}&s_{23}-s_{12}&\ldots & s_{2n}-s_{1,n-1}\\
s_{31}&s_{32}-s_{21}&s_{33}-s_{22}&\ldots & s_{3n}-s_{2,n-1}\\
s_{41}&s_{42}-s_{31}&s_{43}-s_{32}&\ldots & s_{4n}-s_{3,n-1}\\
\vdots&\vdots&\vdots&\ddots& \vdots\\
0&-s_{n1}&-s_{n2}&\ldots & -s_{n,n-1}\\
\end{bmatrix}.
\end{split}
}
\end{equation}
We examine each diagonal of $\Delta B_{21}$ independently since 
each diagonal has unique variables. For each of the first $n$ diagonals
(starting from the bottom left corner) we
have the following system of equations 
\begin{equation}\label{44sdf}
\small{
\left[
 \begin{matrix}
1&&& \\
-1&1&&  \\
&\ddots&\ddots&\\
&&-1&1 \\
&&& -1 \\
\end{matrix}
\right]
\left[
 \begin{matrix}
s_{n+2-k, 1} \\
\vdots \\
s_{n, k-1}
\end{matrix}
\right]
=
\left[
 \begin{matrix}
b_1 \\
\vdots \\
b_{k}
\end{matrix}
\right].
}
\end{equation}
The matrix of this system has $k-1$ columns and $k$ (since the first diagonal is zero $k=2,\dots , n$) rows and
its rank is equal to $k-1$ but the rank of the full matrix of
the system is $k$; by the Kronecker-Capelli theorem \cite{k} the system \eqref{44sdf} does not have
a solution. Nevertheless, if we turn down the first or the last equation of the system
(i.e. we do not set the first or the last element of the corresponding diagonal of $B_{21}$ to zero), then \eqref{44sdf} will have a solution.

For the last $(n-1)$ diagonals we have a system  of equations like \eqref{4sdf}, which has a solution. 
Therefore we can set each element
of the matrix $B_{21}$ to zero except the elements either in the first column or the last row.

The blocks $\Delta (A_{12},B_{12})=(-S_{11}-S_{22}^T,-S_{22}^TJ_n(0)^T-J_n(0)^TS_{11})$ are equal to $\Delta (A_{21},B_{21})$ up to the transposition and sign.

Altogether, we obtain
$${\cal D}(\mathcal H_{n}(\lambda))=
\left( 0,
\begin{bmatrix}
0&0^{\swarrow}\\
0^{\nearrow}&0
\end{bmatrix} \right)
\quad \text{and} \quad {\cal D}(\mathcal K_{n}) = \left(
\begin{bmatrix}
0&0^{\swarrow}\\
0^{\nearrow}&0
\end{bmatrix},0 \right).$$

\subsubsection{Diagonal blocks ${\cal D}(L_{n})$}
\label{dln}

Using Lemma \ref{thekd}(i), like in Section \ref{dhndkn},
we prove that each pair $(A,B)=([A_{ij}]_{i,j=1}^2,[B_{ij}]_{i,j=1}^2)$ of skew-symmetric $(2n+1)$-by-$(2n+1)$ matrices can be set to zero by adding
\begin{equation}\label{moh}
\small{
\begin{split}
&\Delta (A,B)=(\Delta A, \Delta B)= \left( \begin{bmatrix}
\Delta A_{11}& \Delta A_{12}
 \\ \Delta A_{21}&\Delta A_{22}
\end{bmatrix}, \begin{bmatrix}
\Delta B_{11}& \Delta B_{12}
 \\ \Delta B_{21}&\Delta B_{22}
\end{bmatrix} \right) \\ 
&= \begin{bmatrix}
S_{11}^T&S_{21}^T
 \\ S_{12}^T&S_{22}^T
\end{bmatrix}
\bigg( \begin{bmatrix}
0&F_n \\
-F_n^T&0
\end{bmatrix},
\begin{bmatrix}
0&G_n \\
-G_n^T&0
\end{bmatrix}
\bigg) + \left( \begin{bmatrix}
0&F_n \\
-F_n^T&0
\end{bmatrix},
\begin{bmatrix}
0&G_n \\
-G_n^T&0
\end{bmatrix}  \right)
\begin{bmatrix}
S_{11}&S_{12}
 \\ S_{21}&S_{22}
\end{bmatrix}
    \\&=
\bigg( \begin{bmatrix}
-S_{21}^TF_n^T+F_nS_{21}&
S_{11}^TF_n+F_nS_{22}\\
-S_{22}^TF_n^T-F_n^TS_{11}&
S_{12}^TF_n-F_n^TS_{12}
\end{bmatrix}, 
\begin{bmatrix}
-S_{21}^TG_n^T+G_nS_{21}&
S_{11}^TG_n+G_nS_{22}\\
-S_{22}^TG_n^T-G_n^TS_{11}&
S_{12}^TG_n-G_n^TS_{12}
\end{bmatrix} \bigg),
\end{split}
}
\end{equation}
where
    $S=[S_{ij}]_{i,j=1}^2$
is an arbitrary matrix. Each pair of blocks $(A_{ij},B_{ij}), i,j=1,2,$ of $(A,B)$ is changed
independently.

We add
$\Delta (A_{11},B_{11})=(-S_{21}^TF_n^T+F_nS_{21},-S_{21}^TG_n^T+G_nS_{21})$
in which $S_{21}$ is an arbitrary $(n+1)$-by-$n$
matrix to the pair of blocks $(A_{11},B_{11})$. Obviously, by adding $-S_{21}^TF_n^T+F_nS_{21}$
we reduce $A_{11}$ to zero. To preserve $A_{11}$, we must hereafter take 
$S_{21}$  such that $F_nS_{21}=S_{21}^TF_n^T$.
Thus $S_{21}$ without the last row is $n\times n$ and symmetric: 
{\small
$$S_{21}=\begin{bmatrix}
s_{11}&s_{12}&s_{13}&\ldots & s_{1n}\\
s_{12}&s_{22}&s_{23}&\ldots & s_{2n}\\
s_{13}&s_{23}&s_{33}&\ldots & s_{3n}\\
\vdots&\vdots&\vdots&\ddots & \vdots\\
s_{1n}&s_{2n}&s_{3n}&\ldots & s_{nn}\\
s_{1,n+1}&s_{2,n+1}&s_{3,n+1}&\ldots & s_{n,n+1}\\
\end{bmatrix}.$$
}
Now we reduce $B_{11}$ by adding
{\small
\begin{multline}
\Delta B_{11}=-\begin{bmatrix}
s_{11}&s_{12}&s_{13}&\ldots & s_{1n}& s_{1,n+1}\\
s_{12}&s_{22}&s_{23}&\ldots & s_{2n}& s_{2,n+1}\\
s_{13}&s_{23}&s_{33}&\ldots & s_{3n}& s_{3,n+1}\\
\vdots&\vdots&\vdots&\ddots & \vdots & \vdots\\
s_{1n}&s_{2n}&s_{3n}&\ldots & s_{nn}& s_{n,n+1}\\
\end{bmatrix}
\begin{bmatrix}
0& &0\\
1&\ddots &\\
&\ddots & 0\\
0& & 1\\
\end{bmatrix}
 \\ +
\begin{bmatrix}
0&1&&0\\
&\ddots&\ddots&\\
0&& 0&1\\
\end{bmatrix}
\begin{bmatrix}
s_{11}&s_{12}&s_{13}&\ldots & s_{1n}\\
s_{12}&s_{22}&s_{23}&\ldots & s_{2n}\\
s_{13}&s_{23}&s_{33}&\ldots & s_{3n}\\
\vdots&\vdots&\vdots&\ddots & \vdots\\
s_{1n}&s_{2n}&s_{3n}&\ldots & s_{nn}\\
s_{1,n+1}&s_{2,n+1}&s_{3,n+1}&\ldots & s_{n,n+1}\\
\end{bmatrix}
    \\=
\begin{cases}
 \begin{matrix}
-s_{i,j+1}+s_{i+1,j}&  \text{if} \ \ \  i<j, \\
s_{i,j+1}-s_{i+1,j}&  \text{if} \ \ \ i>j, \\
0& \text{if} \ \ \  i=j,
\end{matrix} \ \ \  
\end{cases}
\end{multline}}\noindent where $i,j=1, \ldots , n$.
The upper part of each anti-diagonal of $\Delta B_{11}$ has unique variables. Thus we
reduce each anti-diagonal of $B_{11}$ independently. We
have a system of equations \eqref{4sdf} for
the upper part of each anti-diagonal,
which has a solution.
It follows that we can reduce every anti-diagonal of $B_{11}$ to zero.
Hence we can reduce $(A_{11},B_{11})$ to zero by adding $\Delta (A_{11},B_{11})$.

To the pair of blocks $(A_{12},B_{12})$ we can add
$\Delta (A_{12},B_{12})=(S_{11}^TF_n+F_nS_{22},S_{11}^TG_n+G_nS_{22})$
in which $S_{11}$ and $S_{22}$ are arbitrary matrices of corresponding size. Adding
$S_{11}^TF_n+F_nS_{22}$, we reduce $A_{12}$ to zero.
To preserve $A_{12}$, we must hereafter take $S_{11}$ and $S_{22}$  such that $F_nS_{22}=-S_{11}^TF_n$. This means that
{\small
$$S_{22}=\begin{bmatrix}
&&& 0\\
&-S^T_{11}&& 0\\
&&& \vdots\\
&&& 0\\
y_{1}&y_{2}&\ldots & y_{n+1}\\
\end{bmatrix}.$$
}
Therefore we reduce $B_{12}$ by adding
{\small
\begin{multline}
\Delta B_{12}=S_{11}^TG_n+G_nS_{22}=
\begin{bmatrix}
s_{11}&s_{12}&s_{13}&\ldots & s_{1n}\\
s_{21}&s_{22}&s_{23}&\ldots & s_{2n}\\
s_{31}&s_{32}&s_{33}&\ldots & s_{3n}\\
\vdots&\vdots&\vdots&\ddots & \vdots\\
s_{n1}&s_{n2}&s_{n3}&\ldots & s_{nn}\\
\end{bmatrix}
\begin{bmatrix}
0&1&&0\\
&\ddots&\ddots&\\
0&& 0&1\\
\end{bmatrix}
 \\ +
\begin{bmatrix}
0&1&&0\\
&\ddots&\ddots&\\
0&& 0&1\\
\end{bmatrix}
\begin{bmatrix}
-s_{11}&-s_{12}&\ldots & -s_{1n}& 0\\
-s_{21}&-s_{22}&\ldots & -s_{2n}& 0\\
\vdots&\vdots&\ddots & \vdots & \vdots\\
-s_{n1}&-s_{n2}&\ldots & -s_{nn}&0\\
y_{1}&y_{2}&\ldots &y_{n} & y_{n+1}\\
\end{bmatrix}
    \\=-
\begin{bmatrix}
s_{21}& -s_{11}+s_{22}& -s_{12}+s_{23}&  \ldots & -s_{1,n-1}+s_{2n}& -s_{1n} \\
s_{31}& -s_{21}+s_{32}& -s_{22}+s_{33}&  \ldots & -s_{2,n-1}+s_{3n}& -s_{2n} \\
\hdotsfor{6}\\
s_{n1}& -s_{n-1,1}+s_{n2}& -s_{n-1,2}+s_{n3}& \ldots & -s_{n,n-1}+s_{nn}& -s_{n-1n} \\
-y_1& -s_{n1}-y_2 & -s_{n2}-y_3 &  \ldots & -s_{n,n-1}-y_n & -s_{nn}-y_{n+1} \\
\end{bmatrix}.
\end{multline}
}
It is easily seen that we can set $B_{12}$ to zero by adding $\Delta B_{12}$
(diagonal-wise).

The pair of blocks $\Delta (A_{21},B_{21})=(-S_{22}^TF_n^T-F_n^TS_{11},-S_{22}^TG_n^T-G_n^TS_{11})$ is
analogous to $\Delta (A_{12},B_{12})$ up to transposition and sign.

To the pair of blocks $(A_{22},B_{22})$ we add
$\Delta (A_{22},B_{22})=(S_{12}^TF_n-F_n^TS_{12},S_{12}^TG_n-G_n^TS_{12})$
in which $S_{12}$ is an arbitrary $n$-by-$(n+1)$
matrix. Obviously, by adding $S_{12}^TF_n-F_n^TS_{12}$,
we reduce $A_{22}$ to
zero. To preserve $A_{22}$, we must hereafter take $S_{12}$ such that $S_{12}^TF_n=F_n^TS_{12}$.
Thus
{\small
$$S_{12}=\begin{bmatrix}
s_{11}&s_{12}&s_{13}&\ldots & s_{1n}& 0\\
s_{12}&s_{22}&s_{23}&\ldots & s_{2n}& 0\\
s_{13}&s_{23}&s_{33}&\ldots & s_{3n}& 0\\
\vdots&\vdots&\vdots&\ddots & \vdots & \vdots\\
s_{1n}&s_{2n}&s_{3n}&\ldots & s_{nn}& 0\\
\end{bmatrix}.$$
}
The matrix $S_{12}$ without the last column is $n \times n$ and symmetric.
Now we reduce $B_{22}$ by adding
\begin{equation*}
\footnotesize{
\begin{split}
&\Delta B_{22}= \\ &=
\begin{bmatrix}
s_{11}&s_{12}&s_{13}&\ldots & s_{1n}\\
s_{12}&s_{22}&s_{23}&\ldots & s_{2n}\\
s_{13}&s_{23}&s_{33}&\ldots & s_{3n}\\
\vdots&\vdots&\vdots&\ddots & \vdots\\
s_{1n}&s_{2n}&s_{3n}&\ldots & s_{nn}\\
0&0&0&\ldots & 0\\
\end{bmatrix}
\begin{bmatrix}
0&1&&0\\
&\ddots&\ddots&\\
0&& 0&1\\
\end{bmatrix}
-\begin{bmatrix}
0&&0\\
1&\ddots &\\
&\ddots & 0\\
0& & 1\\
\end{bmatrix}
\begin{bmatrix}
s_{11}&s_{12}&s_{13}&\ldots & s_{1n}& 0\\
s_{12}&s_{22}&s_{23}&\ldots & s_{2n}& 0\\
s_{13}&s_{23}&s_{33}&\ldots & s_{3n}& 0\\
\vdots&\vdots&\vdots&\ddots & \vdots & \vdots\\
s_{1n}&s_{2n}&s_{3n}&\ldots & s_{nn}& 0\\
\end{bmatrix}
    \\&=
\begin{bmatrix}
0& s_{11}& s_{12}& s_{13}& \ldots & s_{1,n-1}& s_{1n} \\
-s_{11}& 0& s_{22}-s_{13}& s_{23}-s_{14}& \ldots& s_{2,n-1}-s_{1n}& s_{2n} \\
-s_{12}& s_{13}-s_{22}& 0& s_{33}-s_{24}& \ldots& s_{3,n-1}-s_{2n}& s_{3n} \\
-s_{13}& s_{14}-s_{23}& s_{24}-s_{33}& 0& \ldots& s_{4,n-1}-s_{3n}& s_{4n} \\
\hdotsfor{7}\\
-s_{1,n-1}& s_{1n}-s_{2,n-1}& s_{2n}-s_{3,n-1}& s_{3n}-s_{4,n-1}& \ldots& 0 & s_{nn} \\
-s_{1n}& -s_{2n}& -s_{3n}& -s_{4n} & \ldots& -s_{nn}& 0
\end{bmatrix}.
\end{split}
}
\end{equation*}
We have a system of equations of type \eqref{systsov}
which has a solution for the upper part of each anti-diagonal.
It follows that we can reduce every anti-diagonal of $B_{22}$ to zero. Hence we can reduce $(A_{22},B_{22})$ to zero by adding $\Delta (A_{22},B_{22})$.

Summing up the analysis for all pairs of blocks, we get ${\cal D}(L_{n})=0$.

\subsection{Off-diagonal
blocks of
$\cal D$ that
correspond to summands
of
$(A,B)_{\text{can}}$
of the same type}

Now we verify the
condition (ii) of
Lemma \ref{thekd} for
off-diagonal blocks of
$\cal D$ defined in
Theorem
\ref{teo2}(ii); the
diagonal blocks of
their horizontal and
vertical strips
contain summands of
$(A,B)_{\text{can}}$ of the same type.

\subsubsection{Pairs of
blocks ${\cal D}(\mathcal H_n(\lambda),\, \mathcal H_m(\mu))$ and
${\cal D}(\mathcal K_n,\mathcal K_m)$}
\label{sub4}

Due to Lemma
\ref{thekd}(ii), it
suffices to prove that
each group of four matrices $((A,B),(-A^T,-B^T))$
can be reduced to
exactly one group of
the form \eqref{lsiu1}
by adding
\[
(R^T\mathcal H_m(\mu)
+\mathcal H_n(\lambda)S, S^T
\mathcal H_n(\lambda)+ \mathcal H_m(\mu))
R),\quad S\in
 {\mathbb C}^{2n\times 2m}, R\in
 {\mathbb C}^{2m\times 2n}.
\]

Obviously, if we reduce the first
pair of matrices, the second pair will be reduced automatically.
So we reduce a pair $(A,B)$ of $2n$-by-$2m$ matrices by
adding
{\small
\begin{multline*}
\Delta (A,B)=R^T\mathcal H_m(\mu)
+ \mathcal H_n(\lambda)S = \\ \left(R^T
\begin{bmatrix}
0&I_m\\
-I_m&0\\
\end{bmatrix}
+
\begin{bmatrix}
0&I_n\\
-I_n&0\\
\end{bmatrix}
S,R^T
\begin{bmatrix}
0&J_m(\mu)\\
-J_m(\mu)^T&0\\
\end{bmatrix}
+
\begin{bmatrix}
0&J_n(\lambda)\\
-J_n(\lambda)^T&0\\
\end{bmatrix}
S\right) .
\end{multline*} }\noindent It is clear that we can reduce $A$ to zero. To preserve
$A$, we must hereafter choose $R=[R_{ij}]_{i,j=1}^2$ and $S=[S_{ij}]_{i,j=1}^2$ such that
{\small
\[
R^T
\begin{bmatrix}
0&I_m\\
-I_m&0\\
\end{bmatrix}
+
\begin{bmatrix}
0&I_n\\
-I_n&0\\
\end{bmatrix}
S=0, \text{ \normalsize{or equivalently} }
\begin{bmatrix}
S_{11}&S_{12}\\
S_{21}&S_{22}\\
\end{bmatrix}=
\begin{bmatrix}
-R^T_{22}&R^T_{12}\\
R^T_{21}&-R^T_{11}\\
\end{bmatrix}.
\]
}
Now $B:=[B_{ij}]_{i,j=1}^2
$ is reduced by adding
{\small
\begin{align*}
\Delta B &:=
\begin{bmatrix}
R^T_{11}&R^T_{21}\\
R^T_{12}&R^T_{22}\\
\end{bmatrix}
\begin{bmatrix}
0&J_m(\mu)\\
-J_m(\mu)^T&0\\
\end{bmatrix}
+
\begin{bmatrix}
0&J_n(\lambda)\\
-J_n(\lambda)^T&0\\
\end{bmatrix}
\begin{bmatrix}
-R^T_{22}&R^T_{12}\\
R^T_{21}&-R^T_{11}\\
\end{bmatrix} \\
&= \begin{bmatrix}
-R^T_{21}J_m(\mu)^T+J_n(\lambda)R_{21}^T&R^T_{11}J_m(\mu)-J_n(\lambda)R_{11}^T\\
-R^T_{22}J_m(\mu)^T+J_n(\lambda)^TR_{22}^T&R^T_{12}J_m(\mu)-J_n(\lambda)^TR_{12}^T\\
\end{bmatrix}.
\end{align*}
}
Therefore $B_{11}$ is reduced by adding
{\small
\begin{align*}
\Delta B_{11}&=-R^T_{21}J_m(\mu)^T+J_n(\lambda)R_{21}^T \\ &=
\begin{cases}
(\lambda-\mu)r_{ij}+r_{i+1,j}-r_{i,j+1}&  \text{if } 1 \leq i \leq (n-1), 1 \leq j \leq (m-1), \\
(\lambda-\mu)r_{ij}+r_{i+1,j}&  \text{if } 1 \leq i \leq (n-1), j=m, \\
(\lambda-\mu)r_{ij}-r_{i,j+1}&  \text{if } 1 \leq j \leq (m-1),  i=n, \\
(\lambda-\mu)r_{ij}& \text{if } i=n, j=m. 
\end{cases}
\end{align*}}\noindent
We have a system of $nm$ equations which has a solution if $\lambda \neq \mu.$
Thus for $\lambda \neq \mu$ we can set $B_{11}$  to zero by adding
$\Delta B_{11}$.

Now we consider $\lambda = \mu,$ i.e.
{\small
\begin{align*}
\Delta B_{11}&=-R^T_{21}J_m(\lambda)^T+J_n(\lambda)R_{21}^T \\
&=\begin{bmatrix}
r_{21}-r_{12}&r_{22}-r_{13}&r_{23}-r_{14}&  \ldots & r_{2,m-1}-r_{1m}& r_{2m}\\
r_{31}-r_{22}&r_{32}-r_{23}&r_{33}-r_{24}&  \ldots & r_{3,m-1}-r_{2m}& r_{3m}\\
r_{41}-r_{32}&r_{42}-r_{33}&r_{43}-r_{34}&  \ldots & r_{4,m-1}-r_{3m}& r_{4m}\\
\hdotsfor{6}\\
r_{n1}-r_{n-1,2}&r_{n2}-r_{n-1,3}&r_{n3}-r_{n-1,4}&  \ldots & r_{n,m-1}-r_{n-1,m}& r_{nm}\\
-r_{n2}&-r_{n3}&-r_{n4}&  \ldots & -r_{nm}& 0\\
\end{bmatrix}.
\end{align*}
}
Like for the system \eqref{45form}, $B_{11}$ can be reduced to $0^{\searrow}$ by adding $\Delta B_{11}$. 

To find the solutions for the other cases we need to multiply the answer for the block $B_{11}$ by $\pm Z$:
{\small
\begin{equation*}
\begin{split}
B_{12}&-R^T_{11}J_m(\mu)-J_n(\lambda)R_{11}^T \\
&=B_{11}Z+R^T_{21}ZZJ_m(\mu)^TZ-J_n(\lambda)R_{21}^TZ= \begin{cases} 0Z=0 &\text{if } \lambda \neq \mu, \\
0^{\searrow}Z=0^{\swarrow} &\text{if } \lambda = \mu, \end{cases} \\
B_{21}&-R^T_{22}J_m(\mu)^T+J_n(\lambda)^TR_{22}^T \\
&=ZB_{11}+ZR^T_{21}J_m(\mu)^T-ZJ_n(\lambda)ZZR_{21}^T= \begin{cases} Z0=0 &\text{if } \lambda \neq \mu, \\
Z0^{\searrow}=0^{\nearrow}&\text{if } \lambda = \mu, \end{cases}\\
B_{22}&-R^T_{12}J_m(\mu)-J_n(\lambda)^TR_{12}^T \\
&=ZB_{11}Z+ZR^T_{21}ZZJ_m(\mu)^TZ-ZJ_n(\lambda)ZZR_{21}^TZ= \begin{cases} Z0Z=0 &\text{if } \lambda \neq \mu, \\
Z0^{\searrow}Z=0^{\nwarrow} &\text{if } \lambda = \mu. \end{cases}
\end{split}
\end{equation*}
}

Summing up the derivations for all blocks, we get
that ${\cal D}(\mathcal H_n(\lambda),\, \mathcal H_m(\mu))$ is equal to \eqref{lsiu1} and, respectively,
${\cal D}(\mathcal K_n,\, \mathcal K_m)$ is equal to \eqref{lsiu2}.

\subsubsection{Pairs of
blocks ${\cal D}(\mathcal L_n, \mathcal L_m)$ }
\label{sub42}

Due to Lemma
\ref{thekd}(ii), it
suffices to prove that
each group of four matrices $((A,B),(-A^T,-B^T))$
can be reduced to
exactly one group of
the form \eqref{lsiu3}
by adding
\[
(R^T\mathcal L_m
+\mathcal L_nS, S^T
\mathcal L_n+ \mathcal L_m R),\quad S\in
 {\mathbb C}^{2n+1\times 2m+1},\ R\in
 {\mathbb C}^{2m+1\times 2n+1}.
\]

It is enough to reduce only the first
pair of matrices, i.e. $(A,B)$. We reduce it by adding 
{\small
\begin{multline*}
\Delta (A,B)=R^T\mathcal L_m
+\mathcal L_n S \\ = \left(R^T
\begin{bmatrix}
0&F_m\\
-F^T_m&0\\
\end{bmatrix}
+
\begin{bmatrix}
0&F_n\\
-F^T_n&0\\
\end{bmatrix}
S,R^T
\begin{bmatrix}
0&G_m\\
-G^T_m&0\\
\end{bmatrix}
+
\begin{bmatrix}
0&G_n\\
-G^T_n&0\\
\end{bmatrix}
S \right) .
\end{multline*}}\noindent
It is easily seen that we can set $A$ to zero. To preserve
$A,$ we must hereafter take $R=[R_{ij}]_{i,j=1}^2$ and $S=[S_{ij}]_{i,j=1}^2$ such that
{\small
\[
\begin{bmatrix}
R^T_{11}&R^T_{21}\\
R^T_{12}&R^T_{22}\\
\end{bmatrix}
\begin{bmatrix}
0&F_m\\
-F^T_m&0\\
\end{bmatrix}
+
\begin{bmatrix}
0&F_n\\
-F^T_n&0\\
\end{bmatrix}
\begin{bmatrix}
S_{11}&S_{12}\\
S_{21}&S_{22}\\
\end{bmatrix}=0,
\]
}
or equivalently 
{\small
\begin{equation} \label{spiv6}
\begin{bmatrix}
-R^T_{21}F_m^T&R^T_{11}F_m\\
-R^T_{22}F_m^T&R^T_{12}F_m\\
\end{bmatrix}=
\begin{bmatrix}
-F_nS_{21}&-F_nS_{22}\\
F_n^TS_{11}&F_n^TS_{12}\\
\end{bmatrix}.
\end{equation}
}
$B:=[B_{ij}]_{i,j=1}^2$ is reduced by adding
{\small
\begin{align*}
\Delta B&:= \begin{bmatrix}
\Delta B_{11}&\Delta B_{12}\\
\Delta B_{21}&\Delta B_{22}\\
\end{bmatrix}  = \begin{bmatrix}
R^T_{11}&R^T_{21}\\
R^T_{12}&R^T_{22}\\
\end{bmatrix}
\begin{bmatrix}
0&G_m\\
-G^T_m&0\\
\end{bmatrix}
+
\begin{bmatrix}
0&G_n\\
-G^T_n&0\\
\end{bmatrix}
\begin{bmatrix}
S_{11}&S_{12}\\
S_{21}&S_{22}\\
\end{bmatrix} \\
&= \begin{bmatrix}
-R^T_{21}G_m^T+G_nS_{21}&R^T_{11}G_m+G_nS_{22}\\
-R^T_{22}G_m^T-G_n^TS_{11}&R^T_{12}G_m-G_n^TS_{12}\\
\end{bmatrix},
\end{align*}
}
where $S_{ij}$ and $R_{ij}$ , $ i,j=1,2$ satisfy (\ref{spiv6}).

We reduce each pair of blocks independently.
First we reduce $B_{11}$.
Using the equality $R^T_{21}F_m^T=F_nS_{21}$ we
obtain that
\begin{equation*}
{\small
S_{21}=
\begin{bmatrix}
&Q&\\
a_{1}& \ldots &a_{m}\\
\end{bmatrix},
R^T_{21}=
\begin{bmatrix}
&&b_1\\
Q&& \vdots\\
&&b_n\\
\end{bmatrix}}, \text{where $Q=[q_{ij}]$ is any $n$-by-$m$ matrix.}
\end{equation*}
Therefore
{\small
\begin{multline}
\Delta B_{11}=-R^T_{21}G^T_m+G_nS_{21}= -
\begin{bmatrix}
&&b_1\\
Q&& \vdots\\
&&b_n\\
\end{bmatrix}
G_m^T+G_n
\begin{bmatrix}
&Q&\\
a_{1}& \ldots &a_{m}\\
\end{bmatrix}\\
=\begin{bmatrix}
q_{21}-q_{12}&q_{22}-q_{13}&  \ldots & q_{2,m-1}-q_{1m}& q_{2m}-b_1\\
q_{31}-q_{22}&q_{32}-q_{23}&  \ldots & q_{3,m-1}-q_{2m}& q_{3m}-b_2\\
q_{41}-q_{32}&q_{42}-q_{33}&  \ldots & q_{4,m-1}-q_{3m}& q_{4m}-b_3\\
\hdotsfor{5}\\
q_{n1}-q_{n-1,2}&q_{n2}-q_{n-1,3}&  \ldots & q_{n,m-1}-q_{n-1,m}& q_{nm}-b_{m-1}\\
a_1-q_{n2}&a_2-q_{n3}&  \ldots & a_{n-1}-q_{nm}& a_n-b_m\\
\end{bmatrix}.
\end{multline}
}
We can set each anti-diagonal of $B_{11}$ to zero independently
by adding the corresponding anti-diagonal of $\Delta B_{11}$.
Thus we can reduce $B_{11}$ by adding $\Delta B_{11}$ to zero.

Now to the pair $(A_{12},B_{12})$:
To preserve $A_{12},$  we take $R_{11}$ and $S_{22}$ such that
$R^T_{11}F_m=-F_nS_{22}$ thus
{\small
\begin{equation*}
S_{22}=
\begin{bmatrix}
&&&0\\
&-R_{11}^T&& \vdots\\
&&&0\\
b_1&\ldots&b_m&b_{m+1}\\
\end{bmatrix},
\end{equation*}
}
where $R_{11}^T$ is any $n$-by-$m$ matrix.
Thus
{\small
\begin{align*}
\Delta B_{12}&=R^T_{11}G_m+G_nS_{22}=
R^T_{11}G_m+G_n
\begin{bmatrix}
&&&0\\
&-R_{11}^T&& \vdots\\
&&&0\\
b_1&\ldots&b_m&b_{m+1}\\
\end{bmatrix} \\
&=\begin{bmatrix}
-r_{21}& r_{11}-r_{22}& r_{12}-r_{23}&\ldots & r_{1,m-1}-r_{2m}& r_{1m}\\
-r_{31}& r_{21}-r_{32}& r_{22}-r_{33}&\ldots & r_{2,m-1}-r_{3m}& r_{2m}\\
-r_{41}& r_{31}-r_{42}& r_{32}-r_{43}&\ldots & r_{3,m-1}-r_{4m}& r_{3m}\\
\hdotsfor{6}\\
-r_{n1}& r_{n-1,1}-r_{n2}& r_{n-1,2}-r_{n3}&\ldots & r_{n-1,m-1}-r_{nm}& r_{n-1m}\\
b_{1}& r_{n1}+b_{2}& r_{n2}+b_{3}&\ldots & r_{n,m-1}+b_{m}& r_{nm}+b_{m+1}\\
\end{bmatrix}.
\end{align*}
}
If $m+1 \geq n$ then we can set $B_{12}$ to zero
by adding $\Delta B_{12}$. If $n > m+1$ then we cannot set $B_{12}$ to zero. Then we reduce it diagonal-wise starting from the top-right corner. By adding the first $m$ and the last $m+1$ diagonals of $\Delta B_{12}$ we set the corresponding
diagonals of $B_{12}$ to zeros. We can set the remaining $n-m-1$
diagonals of $B_{12}$ to zeros, except the last element of each of them.
Hence $(A_{12},B_{12})$ is reduced to $(0,0^{\boxminus T}_{m+1,n})$ by
adding $\Delta (A_{12}, B_{12})$.

$(A_{21},B_{21})$ is reduced in the same way (up to the transposition) as $(A_{12},B_{12})$. Hence it
can be reduced to the form $(0,0^{\boxminus}_{n+1,m})$.

Consider $(A_{22},B_{22})$. We reduce $A_{22}$
to the form $0_*$ by adding $\Delta A_{22}=R^T_{12}F_m-F_n^TS_{12}$.
To preserve $A_{22},$ we must hereafter take $R_{12}$ and $S_{12}$
such that $R^T_{12}F_m=F_n^TS_{12}$ thus
\begin{equation*}
{\small
R^T_{12}=
\begin{bmatrix}
&Q&\\
0& \ldots &0\\
\end{bmatrix},
S_{12}=
\begin{bmatrix}
&&0\\
Q&& \vdots\\
&&0\\
\end{bmatrix}}, \text{ where $Q=[q_{ij}]$ is any $n$-by-$m$ matrix.}
\end{equation*}
Therefore,
{\small
\begin{align*}
\Delta B_{22}&=R^T_{12}G_m-G^T_nS_{12}=
\begin{bmatrix}
&Q&\\
0& \ldots &0\\
\end{bmatrix}
G_m-G_n^T
\begin{bmatrix}
&&0\\
Q&& \vdots\\
&&0\\
\end{bmatrix} \\&=
\begin{bmatrix}
0& q_{11}& q_{12}&  \ldots & q_{1,m-1}& q_{1m} \\
-q_{11}& q_{21}-q_{12}& q_{22}-q_{13}&  \ldots& q_{2,m-1}-q_{1m}& q_{2m} \\
-q_{21}& q_{31}-q_{22}& q_{32}-q_{23}&  \ldots& q_{3,m-1}-q_{2m}& q_{3m} \\
\hdotsfor{6}\\
-q_{n-1,1}& q_{n1}-q_{n-1,2}& q_{n2}-q_{n-1,3}&  \ldots& q_{n,m-1}-q_{n-1,m}& q_{nm} \\
-q_{n1}& -q_{n2}& -q_{n3}&  \ldots& -q_{nm}& 0
\end{bmatrix}.
\end{align*}}\noindent
By adding $\Delta B_{22}$, we can set each element of
$B_{22}$ to zero except the elements in the
first column and the last row {(or, alternatively,} the elements in the first row and the last column{)}.

Summing up the results, we have that
${\cal D}(\mathcal L_n, \mathcal L_m)$ is of the form \eqref{lsiu3}.

\subsection{Off-diagonal
blocks of $\cal D$ that
correspond to summands of
$(A,B)_{\text{can}}$
of different types}

Finally, we verify the
condition (ii) of
Lemma \ref{thekd} for
off-diagonal blocks of
$\cal D$ defined in
Theorem
\ref{teo2}(iii); the
diagonal blocks of
their horizontal and
vertical strips
contain summands of
$(A,B)_{\text{can}}$ of different
types.

\subsubsection{Pairs of
blocks ${\cal D}(\mathcal H_n(\lambda),\mathcal K_m)$ }
\label{sub7}

Due to Lemma
\ref{thekd}(ii), it
suffices to prove that
each group of four matrices $((A,B),(-A^T,-B^T))$
can be reduced to
exactly one group of
the form \eqref{kut}
by adding
\[
(R^T\mathcal K_m+ \mathcal H_n(\lambda) S,
S^T\mathcal H_n(\lambda)
+\mathcal K_mR),\quad R\in
 {\mathbb C}^{2m\times 2n},\ S\in
 {\mathbb C}^{2n\times 2m}.
\]
Obviously, if we reduce $(A,B)$ then the second pair will be
reduced automatically. We have 
\begin{equation*}
{\small
\begin{split}
&\Delta (A,B)=R^T\mathcal K_m
+\mathbb \mathcal H_m(\lambda) S \\ &=  \bigg( R^T
\begin{bmatrix}
0&J_m(0)\\
-J_m(0)^T&0\\
\end{bmatrix}
+
\begin{bmatrix}
0&I_n\\
-I_n&0\\
\end{bmatrix}
S, R^T
\begin{bmatrix}
0&I_m\\
-I_m&0\\
\end{bmatrix}
+
\begin{bmatrix}
0&J_n(\lambda)\\
-J_n(\lambda)^T&0\\
\end{bmatrix}
S \bigg).
\end{split}
}
\end{equation*}
It is clear that we can set $A$ to zero. To preserve $A,$ we
 must hereafter take $R=[R_{ij}]_{i,j=1}^2$ and $S=[S_{ij}]_{i,j=1}^2$ such that
{\small
\[
R^T
\begin{bmatrix}
0&J_m(0)\\
-J_m(0)^T&0\\
\end{bmatrix}
+
\begin{bmatrix}
0&I_n\\
-I_n&0\\
\end{bmatrix}
S=0,
\]
}
or equivalently 
{\small
\begin{equation*}
S=
\begin{bmatrix}
-R^T_{22}J_m(0)^T&R^T_{12}J_m(0)\\
R^T_{21}J_m(0)^T&-R^T_{11}J_m(0)\\
\end{bmatrix}.
\end{equation*}
}
Therefore $B=[B_{ij}]_{i,j=1}^2$ is reduced by adding
{\small
\begin{align*}
\Delta B & = \begin{bmatrix}
\Delta B_{11}&\Delta B_{12}\\
\Delta B_{21}&\Delta B_{22}\\
\end{bmatrix} \\ &=
\begin{bmatrix}
R^T_{11}&R^T_{21}\\
R^T_{12}&R^T_{22}\\
\end{bmatrix}
\begin{bmatrix}
0&I_m\\
-I_m&0\\
\end{bmatrix}
+
\begin{bmatrix}
0&J_n(\lambda)\\
-J_n(\lambda)^T&0\\
\end{bmatrix}
\begin{bmatrix}
-R^T_{22}J_m(0)^T&R^T_{12}J_m(0)\\
R^T_{21}J_m(0)^T&-R^T_{11}J_m(0)\\
\end{bmatrix}\\
& = \begin{bmatrix}
-R^T_{21}+J_n(\lambda)R^T_{21}J_m(0)^T&R^T_{11}-J_n(\lambda)R^T_{11}J_m(0)\\
-R^T_{22}+J_n(\lambda)^TR^T_{22}J_m(0)^T&R^T_{12}-J_n(\lambda)^TR^T_{12}J_m(0)\\
\end{bmatrix}.
\end{align*}
}
The block $B_{11}$ is reduced to zero by adding
{\small
\begin{align*}
\Delta B_{11}& =-R^T_{21}+J_n(\lambda)R^T_{21}J_m(0)^T \\ &=
\begin{cases}
-r_{ij}+\lambda r_{i,j+1}+r_{i+1,j+1}&  \text{if }1 \leq i \leq n-1,1 \leq j \leq m-1,\\
-r_{ij}+\lambda r_{i,j+1}&  \text{if }1 \leq j \leq m-1,i=n, \\
-r_{ij}&  \text{if }1 \leq i \leq n,j=m,
\end{cases}
\end{align*}
}
because it results in a square system of $nm$ equations that has a solution.

The reduction of the other blocks follows from above since 
{\small
\begin{align*}
R^T_{11}-J_n(\lambda)R^T_{11}J_m(0)&=-R^T_{21}Z+J_n(\lambda)R^T_{21}ZZJ_m(0)^TZ,\\
-R^T_{22}+J_n(\lambda)^TR^T_{22}J_m(0)^T&=-ZR^T_{21}Z+ZJ_n(\lambda)ZZR^T_{21}ZZJ_m(0)^TZ,\\
R^T_{12}-J_n(\lambda)^TR^T_{12}J_m(0)&=-ZR^T_{21}+ZJ_n(\lambda)ZZR^T_{21}J_m(0)^T, 
\end{align*}
}
where the matrices $Z$ (see \eqref{zzz}) are of the corresponding sizes.

Altogether, we have that ${\cal D}(\mathcal H_n(\lambda), \mathcal K_m)$ is zero.

\subsubsection{Pairs of
blocks ${\cal D}(\mathcal H_n(\lambda), \mathcal L_m)$ }
\label{sub8}

Due to Lemma
\ref{thekd}(ii), it
suffices to prove that
each group of four matrices $((A,B),(-A^T,-B^T))$
can be reduced to the group of the form \eqref{hnlm} by adding
\[
(R^T\mathcal L_m
+\mathcal H_n(\lambda)S, S^T\mathcal H_n(\lambda)
+ \mathcal L_mR),\quad S\in
 {\mathbb C}^{2n\times 2m+1},\ R\in
 {\mathbb C}^{2m+1\times 2n}.
\]
Obviously, if we only reduce $(A,B)$, then $(-A^T,-B^T)$ will be
 reduced automatically. We have 
 {\small
\begin{multline*}
\Delta (A,B)=R^T\mathcal L_m
+ \mathcal H_n(\lambda)S \\ =\left(R^T
\begin{bmatrix}
0&F_m\\
-F_m^T&0\\
\end{bmatrix}
+
\begin{bmatrix}
0&I_n\\
-I_n&0\\
\end{bmatrix}
S,R^T
\begin{bmatrix}
0&G_m\\
-G_m^T&0\\
\end{bmatrix}
+
\begin{bmatrix}
0&J_n(\lambda)\\
-J_n(\lambda)^T&0\\
\end{bmatrix}
S\right) .
\end{multline*}
}
It is easy to check that we can set $A$ to zero. To preserve
$A,$ we must hereafter take $R=[R_{ij}]_{i,j=1}^2$ and $S=[S_{ij}]_{i,j=1}^2$ such that
{\small
\[
R^T
\begin{bmatrix}
0&F_m\\
-F_m^T&0\\
\end{bmatrix}
+
\begin{bmatrix}
0&I_n\\
-I_n&0\\
\end{bmatrix}
S=0, \text{ \normalsize{or equivalently}  }
S=
\begin{bmatrix}
-R^T_{22}F_m^T&R^T_{12}F_m\\
R^T_{21}F_m^T&-R^T_{11}F_m\\
\end{bmatrix}.
\]
}
Thus $B=[B_{ij}]_{i,j=1}^2$ is reduced by adding
{\small
\begin{align*}
\Delta B &=
\begin{bmatrix}
\Delta B_{11}&\Delta B_{12}\\
\Delta B_{21}&\Delta B_{22}\\
\end{bmatrix} \\�&=
\begin{bmatrix}
R^T_{11}&R^T_{21}\\
R^T_{12}&R^T_{22}\\
\end{bmatrix}
\begin{bmatrix}
0&G_m\\
-G^T_m&0\\
\end{bmatrix} 
+
\begin{bmatrix}
0&J_n(\lambda)\\
-J_n(\lambda)^T&0\\
\end{bmatrix}
\begin{bmatrix}
-R^T_{22}F_m^T&R^T_{12}F_m\\
R^T_{21}F_m^T&-R^T_{11}F_m\\
\end{bmatrix}\\
&= \begin{bmatrix}
-R^T_{21}G^T_m+J_n(\lambda)R^T_{21}F_m^T&R^T_{11}G_m-J_n(\lambda)R^T_{11}F_m\\
-R^T_{22}G^T_m+J_n(\lambda)^TR^T_{22}F_m^T&R^T_{12}G_m-J_n(\lambda)^TR^T_{12}F_m\\
\end{bmatrix}.
\end{align*}
}
First, adding 
{\small
\begin{multline*}
\Delta B_{11}=-R^T_{21}G^T_m+J_n(\lambda)R^T_{21}F_m^T= \\
\begin{bmatrix}
-r_{12}+\lambda r_{11}+r_{21}&-r_{13}+ \lambda r_{12}+r_{22}&  \ldots & -r_{1,m+1}+ \lambda r_{1m}+r_{2m}\\
-r_{22}+\lambda r_{21}+r_{31}&-r_{23}+ \lambda r_{22}+r_{32}&  \ldots & -r_{2,m+1}+ \lambda r_{2m}+r_{3m}\\
\hdotsfor{4}\\
-r_{n-1,2}+\lambda r_{n-1,1}+r_{n1}&-r_{n-1,3}+ \lambda r_{n-1,2}+r_{n2}&  \ldots & -r_{n-1,m+1}+ \lambda r_{n-1,m}+r_{nm}\\
-r_{n2}+\lambda r_{n1}&-r_{n3}+ \lambda r_{n2}&  \ldots & -r_{n,m+1}+ \lambda r_{nm}\\
\end{bmatrix},
\end{multline*}}\noindent 
we can set $B_{11}$ to zero as follows. 
For the last ($n$-th) row of  $B_{11}$ we have the following system of equations 
{\small
\begin{equation} \label{syst999}
\left[
\begin{matrix}
\lambda&-1&&&\\
&\lambda&-1&&\\
&&\ddots&\ddots&\\
&&&\lambda&-1
\end{matrix}
\right] 
\left[
 \begin{matrix}
r_{n1} \\
r_{n2} \\
\vdots \\
r_{nm} \\
r_{n,m+1} 
\end{matrix}
\right]
=
\left[
 \begin{matrix}
b_1 \\
b_2 \\
\vdots \\
b_m
\end{matrix}
\right]
\end{equation}
}
which has a solution. For the $(n-1)$-th row we have
{\small
\begin{equation} \label{system57}
\left[
\begin{matrix}
\lambda&-1&&&\\
&\lambda&-1&&\\
&&\ddots&\ddots&\\
&&&\lambda&-1
\end{matrix}
\right] 
\left[
 \begin{matrix}
r_{n-1,1} \\
r_{n-1,2} \\
\vdots \\
r_{n-1,m} \\
r_{n-1,m+1} 
\end{matrix}
\right]
=
\left[
 \begin{matrix}
b_1 \\
b_2 \\
\vdots \\
b_m
\end{matrix}
\right] - 
\left[
 \begin{matrix}
r_{n1} \\
r_{n2} \\
\vdots \\
r_{nm}
\end{matrix}
\right].
\end{equation}
}
The variables ${r_{n1},r_{n2}, \ldots , r_{nm}}$ are known from \eqref{syst999}, thus \eqref{system57} becomes a system of the type \eqref{syst999} and the system \eqref{system57} has a solution. Repeating this reduction to every row from the bottom to the top, we set $B_{11}$ to zero. 

The block $B_{21}$ is reduced like the block $B_{11}$ and thus we omit this verification.

Now we turn to the reduction of $B_{12}$ and $B_{22}$. It suffices to consider only $B_{12}$. We have
{\small
\begin{multline*}
\Delta B_{12}=R^T_{11}G_m-J_n(\lambda)R^T_{11}F_m\\= 
\begin{bmatrix}
-\lambda r_{11}-r_{21}& r_{11}-\lambda r_{12}-r_{22}&  \ldots& r_{1,m-1}-\lambda r_{1m}-r_{2m}& r_{1m} \\
-\lambda r_{21}-r_{31}& r_{21}-\lambda r_{22}-r_{32}&  \ldots& r_{2,m-1}-\lambda r_{2m}-r_{3m}& r_{2m} \\
\hdotsfor{5}\\
-\lambda r_{n-1,1}-r_{n1}& r_{n-1,1}-\lambda r_{n-1,2}-r_{n2}&  \ldots& r_{n-1,m-1}-\lambda r_{n-1,m}-r_{nm}& r_{n-1,m} \\
-\lambda r_{n1}& r_{n1}-\lambda r_{n2}&  \ldots& r_{n,m-1}-\lambda r_{nm}& r_{nm} \\
\end{bmatrix}.
\end{multline*}
}
Adding $\Delta B_{12}$ we reduce $B_{12}$ to the form $0^{\leftarrow}$.

Summing up the results for all the blocks, we have that ${\cal D}(\mathcal H_n(\lambda), \mathcal L_m)$ is
equal to \eqref{hnlm}.

\subsubsection{Pairs of
blocks ${\cal D}(\mathcal K_n, \mathcal L_m)$ }
\label{sub9}

Due to Lemma
\ref{thekd}(ii), it
suffices to prove that
each group of four matrices $((A,B),(-A^T,-B^T))$
can be reduced to
the group of the form \eqref{ktlm}
by adding
\[
(R^T\mathcal L_m
+\mathcal K_nS, S^T
\mathcal K_n+ \mathcal L_mR),\quad S\in
 {\mathbb C}^{2n\times 2m+1},\ R\in
 {\mathbb C}^{2m+1\times 2n}.
\]
As in the previous sections, we reduce only $(A,B)$ and $(-A^T,-B^T)$ is reduced automatically. We have 
{\small
\begin{multline*}
\Delta (A,B)=R^T\mathcal L_m+\mathcal K_n S \\ =\left(R^T
\begin{bmatrix}
0&F_m\\
-F_m^T&0\\
\end{bmatrix}
+
\begin{bmatrix}
0&J_n(0)\\
-J_n(0)^T&0\\
\end{bmatrix}
S,R^T
\begin{bmatrix}
0&G_m\\
-G_m^T&0\\
\end{bmatrix}
+
\begin{bmatrix}
0&I_n\\
-I_n&0\\
\end{bmatrix}
S \right) .
\end{multline*}}\noindent
It is clear that we can set $B$ to zero. To preserve $B,$
we must hereafter take $R=[R_{ij}]_{i,j=1}^2$ and $S=[S_{ij}]_{i,j=1}^2$ such that
{\small
\[
R^T
\begin{bmatrix}
0&G_m\\
-G_m^T&0\\
\end{bmatrix}
+
\begin{bmatrix}
0&I_n\\
-I_n&0\\
\end{bmatrix}
S=0,
\text{ \normalsize{or equivalently} }
S=
\begin{bmatrix}
-R^T_{22}G_m^T&R^T_{12}G_m\\
R^T_{21}G_m^T&-R^T_{11}G_m\\
\end{bmatrix}.
\]
}
Hence $A=[A_{ij}]_{i,j=1}^2$ is reduced by adding
{\small
\begin{align*}
\Delta A &=
\begin{bmatrix}
\Delta A_{11}&\Delta A_{12}\\
\Delta A_{21}&\Delta A_{22}\\
\end{bmatrix} \\ &=
\begin{bmatrix}
R^T_{11}&R^T_{21}\\
R^T_{12}&R^T_{22}\\
\end{bmatrix}
\begin{bmatrix}
0&F_m\\
-F^T_m&0\\
\end{bmatrix} 
+
\begin{bmatrix}
0&J_n(0)\\
-J_n(0)^T&0\\
\end{bmatrix}
\begin{bmatrix}
-R^T_{22}G_m^T&R^T_{12}G_m\\
R^T_{21}G_m^T&-R^T_{11}G_m\\
\end{bmatrix}\\
&= \begin{bmatrix}
-R^T_{21}F^T_m+J_n(0)R^T_{21}G_m^T&R^T_{11}F_m-J_n(0)R^T_{11}G_m\\
-R^T_{22}F^T_m+J_n(0)^TR^T_{22}G_m^T&R^T_{12}F_m-J_n(0)^TR^T_{12}G_m\\
\end{bmatrix}.
\end{align*}
}
First we reduce the block $A_{11}$ ($A_{21}$ is reduced in the same way). We have 
{\small
\begin{multline*}
\Delta A_{11}=-R^T_{21}F^T_m+J_n(0)R^T_{21}G_m^T\\=
\begin{bmatrix}
-r_{11}+r_{22}&-r_{12}+r_{23}&-r_{13}+r_{24}&  \ldots & -r_{1m}+ r_{2,m+1}\\
-r_{21}+r_{32}&-r_{22}+r_{33}&-r_{23}+r_{34}&  \ldots & -r_{2m}+ r_{3,m+1}\\
\hdotsfor{5}\\
-r_{n-1,1}+r_{n2}&-r_{n-1,2}+r_{n3}&-r_{n-1,3}+r_{n4}&  \ldots & -r_{n-1,m}+ r_{n,m+1}\\
-r_{n1}&-r_{n2}&-r_{n4}&  \ldots & -r_{nm}\\
\end{bmatrix},
\end{multline*}}\noindent 
and thus we reduce each diagonal of $A_{11}$ independently.
For each of the first $m$ diagonals, starting from the bottom-left corner, we have a system of type 
\eqref{systsov}
which has a solution, and for the remaining diagonals we have the system of type \eqref{4sdf}
which has a solution too.
Thus adding $\Delta A_{11}$ we set $A_{11}$ to zero.

Last, we reduce the blocks $A_{12}$ and $A_{22}$ and it is enough to consider
only $A_{12}$. We have 
{\small
\begin{align*}
\Delta A_{12}&=R^T_{11}F_m-J_n(0)R^T_{11}G_m\\ &=
\begin{bmatrix}
 r_{11}&  r_{12}-r_{21}&  r_{13}-r_{22} & \ldots&  r_{1m}-r_{2,m-1}& -r_{2m} \\
 r_{21}&  r_{22}-r_{31}&  r_{13}-r_{32} & \ldots&  r_{2m}-r_{3,m-1}& -r_{2m} \\
\hdotsfor{6}\\
 r_{n-1,1}&  r_{n-1,2}-r_{n1}&  r_{n-1,3}-r_{n2} & \ldots& r_{n-1,m}-r_{n,m-1}& -r_{nm} \\
 r_{n1}&  r_{n2}& r_{n3}&  \ldots& r_{nm}& 0 \\
\end{bmatrix}.
\end{align*}
}
Adding $\Delta A_{12}$ we reduce $A_{12}$ to the form $0^{\rightarrow}$.

Summing up the results for all blocks we have that ${\cal D}(\mathcal K_n, \mathcal L_m)$ is equal
to \eqref{ktlm}.

\section*{Acknowledgements}

The author is thankful to Bo K\r{a}gstr\"{o}m and Vladimir V. Sergeichuk for their constructive comments and discussions on the manuscript. {The author also thanks to the anonymous referees for the helpful remarks and suggestions.}

This is an extended version of a part of the author's Master Thesis \cite{dm1}, written under the supervision of Vladimir V. Sergeichuk at the Kiev National University.

The work was supported by the Swedish Research 
Council (VR) under grant E0485301, and by eSSENCE, a strategic collaborative e-Science programme funded by the Swedish Research Council.

{\footnotesize

}
\end{document}